\documentclass[11pt]{article}

\usepackage[a4paper,margin=1in]{geometry}
\usepackage{amsfonts,amssymb,amsmath,amsthm}
\usepackage{graphicx}
\usepackage{psfrag}
\usepackage{calc}
\usepackage{epic}
\usepackage[english]{babel}
\usepackage{hyperref}
\usepackage{float}
\usepackage{latexsym}
\usepackage{color}
\usepackage{subfigure,dsfont}
\allowdisplaybreaks

\DeclareMathOperator*{\argmax}{arg\,max}

\usepackage{hyperref}
\newcommand{\footremember}[2]{%
    \footnote{#2}
    \newcounter{#1}
    \setcounter{#1}{\value{footnote}}%
}
 
\title{Stability and moment bounds under utility-maximising service allocations: finite and infinite networks
}
\author{%
  Seva Shneer\footremember{HW}{Heriot-Watt University, V.Shneer@hw.ac.uk}%
  \and Alexander Stolyar\footremember{UIUC}{University of Illinois at Urbana-Champaign, stolyar@illinois.edu}%
  }
\date{}
\setcounter{MaxMatrixCols}{10}

\newtheorem{theorem}{Theorem}
\newtheorem{lemma}[theorem]{Lemma}

\newtheorem{corollary}[theorem]{Corollary}

\theoremstyle{remark}
\newtheorem{remark}{Remark}

\newcommand{\be}{ \begin{equation}}
\newcommand{\ee}{\end{equation}}
\newcommand{\ben}{ \begin{equation*}}
\newcommand{\een}{\end{equation*}}

\newcommand{\beql}[1]{\begin{equation}\label{#1}}
\newcommand{\eeql}{\end{equation}}
\newcommand{\eqn}[1]{(\ref{#1})}

\def\E{{\mathbb E}}
\def\P{{\mathbb P}}
\def\I{{\mathbb I}}
\def\N{{\mathcal N}}
\def\C{{\mathcal C}}

\def\ox{{\bar{x}}}
\def\ol{{\bar{\lambda}}}
\def\on{{\bar{\nu}}}
\def\om{{\bar{\mu}}}

\begin{document}
\maketitle

\begin{abstract}
We study networks of interacting queues governed by utility-maximising service-rate allocations in both discrete and continuous time. For {\em finite} networks we establish stability and some steady-state moment bounds under natural conditions and rather weak assumptions on utility functions. These results are obtained using direct applications of Lyapunov-Foster-type criteria, and apply to a wide class of systems, including those for which fluid limit-based approaches are not applicable.

We then establish stability and some steady-state moment bounds for two classes of {\em infinite} networks, with single-hop and multi-hop message routes. These results are proved by considering the infinite systems as limits of their truncated finite versions. The uniform moment bounds for the finite networks play a key role in these limit transitions.
\end{abstract}

{\em Keywords:} stochastic stability; stationary moment bounds; utility-maximising service allocations; queueing networks; wireless networks; single-hop networks; multi-hop networks; infinite networks 

{\em MSC 2010 subject classifications: }Primary 60K25, Secondary 68M12

\section{Introduction}

In this paper we consider networks of interacting queues. These models are primarily motivated by wireless systems, where the interference between simultaneous transmissions by different nodes imposes certain constraints. For example, ``neighbouring'' nodes may not be allowed to transmit simultaneously and/or a node's effective transmission rate depends on the transmission powers of the node and its neighbours. However, the basic model studied in this paper takes a more abstract point of view, namely, it is an arbitrary network of queues such that individual instantaneous service rates may depend on the state of the entire system. 
The network state is represented as a set $\bar X = (X_i, ~i\in \mathcal N)$ of the queue lengths $X_i$ 
(of jobs, or messages, or customers) at the network nodes $i\in \mathcal N$. 
Each node receives exogenous arrivals of jobs (messages).
We consider both {\em discrete-time} and {\em continuous-time} models,
and both {\em finite} and {\em infinite} networks. Also, in addition to {\em single-hop} networks, where each job leaves the system after its service is completed, we consider one special class of {\em multi-hop} networks, where, after a service completion at one node, a customer may leave the network or be routed to another node, and these routing decisions are taken according to a certain random procedure.

In discrete-time models time is divided into {\em slots} of the same (unit) size, and each job (or message) takes exactly one slot to complete  service at a node. A service allocation algorithm (or rule) is any mapping (deterministic or random) of a network state $\bar X$ into the set 
of nodes that serve jobs (transmit messages) in a slot. (See, e.g., \cite{SnSt2018} for a recent model of an algorithm in discrete time employing a random procedure.) The instantaneous service rate $\mu_i$ of node $i$ in a slot is the probability that it will serve a job. Thus, the deterministic mapping $\bar \psi(\bar X) = (\psi_i(\bar X), ~i\in \mathcal N)$ of a network state $\bar X$ into a set of instantaneous service rates $\bar \mu = (\mu_i, ~i\in \mathcal N)$ is determined by the service algorithm; the mapping $\bar \psi(\bar X)$ is referred to as a service rate allocation algorithm (or rule). 

In continuous-time models, the instantaneous service rate $\mu_i$ of a node represents the intensity of the Poisson process modelling departures (service completions) of the node. In this case, the service rate allocation algorithm $\bar \psi(\bar X)$, mapping a network state $\bar X$ into a set of instantaneous service rates $\bar \mu$, is all that is needed to specify the service allocation algorithm 
(see, e.g. \cite{Baccelli2018} for a recent model of an algorithm in continuous time).

In this paper we study service allocation algorithms (in both discrete and continuous time), such that the corresponding
service rate allocation $\bar \psi(\bar X)$ maximises some utility function within some set $\mathcal C$. In some cases, the set $\mathcal C$
arises naturally as the set of all feasible instantaneous rates $\bar \mu$ {\em given the model structure}, but not necessarily. For instance, in the networks considered in \cite{SnSt2018}, as well as Section \ref{sec:infinite_single} here, the set $\mathcal{C}$ is a subset of the set of all feasible rates. Our main goal is to obtain network {\em stability} conditions, in terms of set $\mathcal C$. For example, our main stability results (Theorems \ref{thm:fairness_stability} and \ref{thm:stability_cont}) for {\em single-hop} networks show that the network is stable when the exogenous arrival rates $\bar \lambda = (\lambda_i, ~i\in \mathcal N)$ are (``strictly'') within set $\mathcal C$. In addition to stability, we are able to obtain some steady-state moments bounds (Theorems \ref{thm:fair_tight} and \ref{thm:fair_tight_cont}). These moment bounds turn out to be key to establishing stability of infinite networks, because they allow a limit transition from finite to infinite networks (Theorems \ref{thm:periodic} and \ref{thm:periodic_cont}).

Service rate allocations $\bar \psi(\bar{X})$, under many natural service allocation algorithms, are such that
 $\psi_i(\bar{X})$ is decreasing in each $X_j$ for $j \neq i$, as for these algorithms a higher load in queue $j$ usually leads to all other queues receiving less service. This property is in fact satisfied by the rates defined by algorithms introduced in \cite{Baccelli2018} and \cite{SnSt2018} that we will study here as examples. We would like to emphasise, however, that for our general results we are {\em not} going to make this assumption. Our motivation for this stems, again, from wireless networks where there are many competing factors at play and in many situations $\psi_i$ may not be decreasing in $X_j$ for some $j \neq i$ (see, e.g. the model considered in \cite{SnSt2017}, where the authors consider an algorithm designed to ensure avoidance of conflicts which gives advantage to a transmitter if its non-immediate neighbours are transmitting). This leads to a potentially wide range of possible assumptions on the dependence of service rates assigned to different queues on the state of the network.

We are interested in conditions guaranteeing stability. In finite networks  stability, informally speaking, means the ability of all queues to complete service of all jobs, without the number of outstanding jobs building up infinitely. More formally, this means that the Markov chain $\bar X(\cdot)$ is positive recurrent. This also implies the existence and uniqueness of a stationary distribution. 

In infinite networks, by stability we will understand the existence of a proper invariant distribution. In the cases when the system process is {\em monotone}, this implies that the process distribution converges to a proper steady-state (namely, the lower invariant measure), starting from the ``empty'' initial state, as time goes to infinity.

An important concept, explored extensively in the literature, is that of {\em maximum stability} (or {\em throughput optimality}).
To illustrate this concept, consider a finite network and let $\C$ be the set of all feasible {\em long-term} rates that can be provided to the nodes, given model constraints. Such a set $\C$ is typically convex.
Then, an algorithm is called maximally stable (or, throughput-optimal) if it guarantees stability as long as
$\ol < \on$ for some $\on \in \C$; in other words, essentially, as long as the stability is feasible at all. For a large class of networks,
the celebrated MaxWeight algorithm (\cite{Tassiulas1992}) and $\alpha$-fair algorithm are known to be maximally stable.
(See \cite{Kelly1998, Mo2000, Roberts2000} for introduction of the fair-allocation concepts and \cite{Bonald2001, DeVeciana2001} for stability proofs.) These algorithms, however, are {\it centralised} in that service-rate allocations are given by a solution to an optimisation problem that needs to be found by a certain central entity. There are also {\it decentralised} algorithms (where each node regulates its own behaviour according to its queue length) guaranteeing maximal stability (see \cite{JW2010, Shah2012}), but they are known to suffer from large job delays. (This, in particular, prompted the introduction and analysis of algorithms, which are not maximally stable, and instead ensure stability
for $\ol$ within a ``smaller'' set than the set of all feasible long-term rates, and this stability being not necessarily convex. See, e.g., \cite{St2005alq}.)

Some maximally stable algorithms are designed in such a way that the average service rates maximise a certain utility function. A notable example is presented by $\alpha$-fair algorithms where the rates $\psi_i$ are such that
$$
\overline{\psi} \in \argmax_{\overline{\mu} \in \C} \sum_i X_i \frac{1}{1-\alpha} \left(\frac{\mu_i}{X_i}\right)^{1-\alpha}, ~~\mbox{when}~~\alpha > 0, ~\alpha \ne 1,
$$
or
$$
\overline{\psi} \in \argmax_{\overline{\mu} \in \C} \sum_i X_i \log (\mu_i/X_i), ~~\mbox{when}~~\alpha = 1,
$$
where the set $\C$ is usually assumed to be convex.
 The known stability proofs are based on the fluid-limit approach (\cite{RybSt92, Dai95, St95}) and, in particular, implicitly use the fact that $\alpha$-fair service-rate allocations are $0$-homogeneous (or, asymptotically $0$-homogeneous), which allows a relatively simple characterisation of fluid-limit dynamics. 
 
In this paper we consider general utility-optimising algorithms, which, in particular, do not necessarily assign $0$-homogeneous rates to queues. We also do {\em not} require that the maximisation set is necessarily convex.
Our goal is threefold. First, we show that these very general algorithms for finite networks ensure stability when $\ol$ is within $\C$. Second, we also find some moment bounds for the stationary queue-length distributions. And finally, we demonstrate how our moment bounds may be used to extend the stability results and moment bounds to some infinite networks.

In the first part of our paper, we consider a class of general utility-optimising algorithms and prove that they are stable when $\ol$ is within $\C$. Namely, we study average service-rate allocations $\psi_i$ such that
$$
\overline{\psi} \in \argmax_{\overline{\mu} \in \C} \sum_i g(X_i) h(\mu_i),
$$
with some conditions on the functions $g$ and $h$. Our conditions do not imply that the service-rate allocations are $0$-homogeneous, hence the existing stability results, based on fluid limits, do not apply. Moreover, we do not even require that the function $g$ is defined for non-integer values of the argument. Our results are valid for a large class of functions $g$ such that $g(n+1)/g(n) \to 1$ as $n \to \infty$. This class includes functions $g(n) = n^{\alpha}$ used in $\alpha$-fair allocations, as well as functions of the form $g(n) = e^{\log^\beta n}$ with $\beta > 0$ and $g(n) = e^{n^{\gamma}}$ with $\gamma \in (0,1)$, among others. Our results are also valid for a very general class of functions $h$. Our stability proofs in both discrete- and continuous-time settings are based on the direct application of the Lyapunov-Foster techniques.

In discrete time, for our general results (Theorem \ref{thm:fairness_stability} and Theorem \ref{thm:fair_tight}) we impose a strong additional assumption that the number of arrivals into each queue in a time slot is given by a Bernoulli random variable, while if we restrict our attention to a particular scenario of interest (see Theorem \ref{thm:bound_third}), we only need to assume a finite third moment of the per-slot number of arrivals.
 In continuous time however (which is the standard setting for $\alpha$-fair allocations) no additional assumptions are needed. We note again that we do not assume that the set $\C$ is convex.

Once stability is established, one is interested in characteristics of the stationary regime. For both discrete- and continuous-time settings, we demonstrate how essentially the same techniques used to prove stability may be employed to establish explicit bounds on the moments of queue states in stationarity.

These bounds are interesting in their own right, especially as very few results are known on the stationary regimes of networks governed by utility-maximising algorithms. We note \cite{Shah2014} where an exponential bound has been established for the tail of the total stationary queue length of a system under an $\alpha$-fair algorithm in a Markovian setting, and \cite{Meyn1995} where sufficient conditions for the existence of finite moments were established for general arrival streams. We note however that the results of both \cite{Meyn1995} and \cite{Shah2014} imply finiteness of some moments of the stationary queue-length distributions but do not imply any bounds on them as the various constants are not explicit. In the second part of our paper, having explicit bounds is crucial for the analysis of some infinite networks.

In the second part of the paper, we apply the moment bounds established in this paper to obtain stability results for infinite networks in discrete and continuous time considered in recent papers \cite{SnSt2018} and \cite{Baccelli2018}, respectively. The models considered in the two papers are motivated by different wireless networks but share similar service-rate allocations. As our stability and moment analysis is based on service-rate allocations only, it allows us to handle both discrete and continuous cases, and particular characteristics of the two models (which are very different), beyond the service-rate allocation, do not play any role in the proofs.

The simplest example of the two networks (results for more general settings are presented in the paper; we focus on a simple example in the introduction only) is given by nodes located on an infinite line $\mathbb{Z}$ and such that, given the state of the system $\bar X$, the service-rate allocation is given by
$$
\psi_i = \frac{X_i}{X_{i-1}+X_i+X_{i+1}}.
$$
The so-called {\em rate stability} (guaranteeing the queue lengths do not grow linearly in time) is demonstrated in both discrete and continuous settings in \cite{SnSt2018} for arrival rates $\ol$ within some natural set $\C$. Authors of \cite{Baccelli2018} considered a continuous-time model where arrival rates into all nodes are the same and equal to $\lambda$, say. They consider  
the system dynamics on intervals $(-n,\ldots,n)$ viewed as a circle, with a growing $n$. These systems are stable for any $n$, provided $\lambda < 1/3$, and one can thus consider their stationary measures. Using the natural monotonicity of the corresponding process, and tightness of these measures, a stationary measure (in fact, the lower invariant measure) is constructed for the infinite network. To establish uniqueness of this stationary measure among those with finite second moments of the queue lengths, one needs a bound on the second moments of stationary measures of the systems on the circle, independent of their size. This was not established in \cite{Baccelli2018} and left as a conjecture (Conjecture 1.12).

Our analysis is based on showing that the rates of \cite{Baccelli2018} and \cite{SnSt2018} are in fact utility-maximising (or $2$-fair in the $\alpha$-fair terminology) in a certain natural set $\C$ -- a fact already mentioned in \cite{SnSt2018}. This allows us to use our results on stability and moment bounds for finite systems. In particular, our moment bounds immediately imply a uniform (not depending on the size of the network) bound for second moments. This, in turn, proves \cite[Conjecture 1.12]{Baccelli2018} in the case of identical arrival rates, 
with all its implications, including the uniqueness of the stationary measure constructed there, among stationary measures with finite second moments of the queue lengths.

Our analysis, however, allows to demonstrate the existence of a stationary measure with a finite second moment in far more general settings where arrival rates do not need to be the same at all nodes, but may be periodic (or dominated by periodic). Our analysis of systems with the specific service rates of \cite{Baccelli2018} and \cite{SnSt2018} does rely on the existence and stability of fluid limits of the systems.

As our analysis is based on utility maximisation and continuity properties of the processes (see Section \ref{sec:basic} for the definition of continuity property), it is not specific to the rates considered in \cite{Baccelli2018} and \cite{SnSt2018} and may be applied to other infinite networks. We present further examples of models where the same strategy applies. The examples provided in this paper are however not exhaustive.

Finally, we also consider a multi-hop network of \cite{SnSt2018}. In a multi-hop network, jobs, after being served at one queue, may leave the network or join another queue to be served there. The analysis of multi-hop networks is notoriously difficult. As in \cite{SnSt2018}, we restrict our attention to symmetric routing. We use similar techniques to the ones we applied in the single-hop setting to first obtain moment bounds for finite networks and then apply these bounds to establish stability of an infinite network. Stability in this case is weaker than that obtained in the single-hop case as the multi-hop network lacks monotonicity, which is at the core of the construction of the lower invariant measure in \cite{Baccelli2018}.

To summarise, our contributions are the following:

\begin{itemize}

\item We provide a proof of stability of utility-maximising algorithms in a general setting, covering cases in which fluid limit technique cannot be applied. In particular, we do not assume that service rate allocations are $0$-homogeneous, and thus use more general utility functions compared to the classical $\alpha$-fair algorithms. This comes at the expense of additional assumptions on the arrival processes in discrete time. There are, however, no additional assumptions made in the case of a Markovian (driven by Poisson arrivals and departures) continuous-time system.

\item Using a similar approach, we obtain steady-state moment bounds, provided stability conditions hold.

\item The same ideas allow us to obtain further explicit steady-state moment bounds in some special cases of interest, which, in turn, allow us to establish stability and moment bounds of some infinite networks. When restricting our attention to some specific service rates, we do use the existence and stability of fluid limits.

\item We use similar techniques to establish moment bounds for a certain finite multi-hop network and use these bounds to establish stability and moment bounds for its infinite version.

\end{itemize}

\subsection{Structure of the paper}

In order to facilitate a smooth presentation, we first present our main results in the discrete-time setting. Section \ref{sec:finite} is devoted to finite networks. We describe the model, present our stability results and steady-state moment bounds. Sections \ref{sec:infinite_single} and \ref{sec:infinite_multi} are devoted to infinite networks, single- and multi-hop settings, respectively.  In Section~\ref{sec:infinite_single}, we first analyse the model introduced in \cite{Baccelli2018} and \cite{SnSt2018} and later demonstrate how our analysis may be extended to treat other related models. Section \ref{sec:infinite_multi} is devoted to a symmetrical multi-hop extension of the model of \cite{Baccelli2018} and \cite{SnSt2018}.

Section \ref{sec:continuous} is devoted to the continuous-time setting. We describe the model, explain why the analysis is a simplified version of our analysis in the discrete-time setting and present continuous-time versions of our results.

The paper contains many parts, some of which are connected directly through statements while others are only connected through ideas. The structure of the paper is thus non-linear and we present an illustration of how the various parts are related in figure \ref{fig:connections}.

\begin{figure}
\centering
\includegraphics[width=\linewidth]{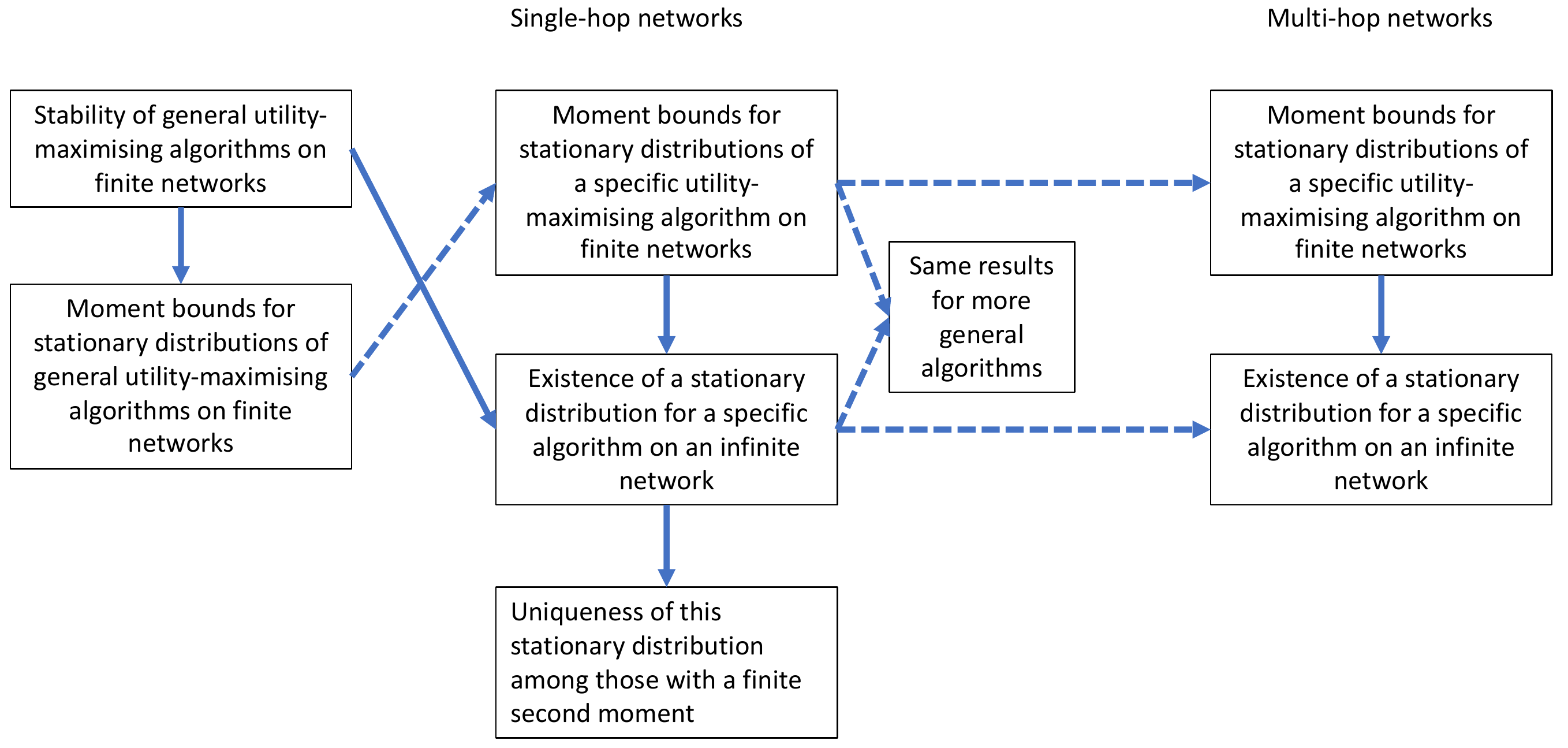}
\caption{An illustration of connections between various parts of the paper. Solid lines represent strict logical connections, dashed lines - connections in terms of ideas}
\label{fig:connections}
\end{figure}

\subsection{ Basic notation, conventions and definitions} \label{sec:basic}

We will use the following notation throughout: $\mathbb{R}$ and $\mathbb{R}_+$ are the sets of real and real non-negative numbers, respectively; $\mathbb{Z}^d$ is the $d$-dimensional lattice; $\mathbb{Z}_{+}$ is the set of non-negative integers;
$\bar{y}$ means (finite- or infinite-dimensional) vector $(y_i)$; for a finite-dimensional vector $\bar{y}$, $\|y\| = \sum_i |y_i|$;
for a set of functions $(f_i)$ and a vector $(y_i)$, $\bar{f}(\bar{y})$ denotes the vector $(f_i(\bar{y}))$; vector inequalities are understood component-wise; we also use the convention that $0/0 = 0$. 

Abbreviation {\em w.p.1} means {\em with probability $1$}.
The convergence in distribution of random elements is denoted by $\Rightarrow$. 
A discrete-time random process $(Y(k), ~k=0,1,2, \ldots)$ is often referred to as $Y(\cdot)$, and similarly for a continuous-time process $(Y(t), ~t\ge 0)$.

We will say that a sequence of random processes $Y^{(m)}(\cdot), m=1,2, \ldots,$ and a random process $Y(\cdot)$ satisfy a {\em continuity} property, if the following holds. For any (random) initial state $Y(0)$, and any sequence of (random) initial states $Y^{(m)}(0), m=1,2, \ldots,$
such that $Y^{(m)}(0) \Rightarrow Y(0)$, all processes can be coupled (constructed on a common probability space) so that
$Y^{(m)}(k) \to Y(k)$ w.p.1, for any $k=0,1,\ldots$ (or $Y^{(m)}(t) \to Y(t)$ w.p.1, for any $t\ge 0$, for continuous time). This continuity property could be called generalised Feller-continuity, because in the special case when all $Y^{(m)}(\cdot)$ are copies of the same process $Y(\cdot)$, differing only by the initial state, the property defined above is Feller-continuity of $Y(\cdot)$; we call it continuity for short. 

\section{Finite single-hop networks: stability analysis and moment bounds} \label{sec:finite}

In this Section we consider finite single-hop networks, where a job, after being served at any queue (node), leaves the system. Since the number of nodes is finite, the process describing system evolution is a countable (irreducible) Markov chain. The finite-network process {\em stability} is defined as positive recurrence of the Markov chain, which (due to irreducibility) is equivalent to the existence of a unique stationary distribution. First, we introduce the model and make general assumptions, then state and prove stability results and finally obtain moment bounds on stationary distributions.

\subsection{Model and assumptions.} 
\label{sec:model_discrete_finite}

Assume that there are $N$ queues, each having its own arrival stream of jobs, and having an infinite buffer to store outstanding jobs. For models in discrete time, we will assume that all jobs require service that lasts $1$ time unit, time is split into {\em slots} of length $1$, and all arrivals and all service initiations happen at the beginning of a time slot, so that all services are completed by the end of a time slot. These assumptions are motivated mainly by wireless networks.

For convenience we assume that at the beginning of each time slot, first new services are started, and then new arrivals happen. We will denote time slots by $k=0,1,\ldots$. We can then write the evolution of the queue of node $i$ as
\begin{equation} \label{eq:dynamics_discrete}
X_i(k+1) = X_i(k) + \xi_i(k) - \eta_i(k), 
\end{equation}
where $\xi_i(k)$ denotes the number of new job arrivals into queue $i$ at time slot $k$, and $\eta_i(k)$ denotes the number of service completions in queue $i$ at time $k$.

We will assume that for each $i$, the sequence $\xi_i(k), ~k=0,1,2,\ldots$ consists of i.i.d. random variables such that $\E(\xi_i) = \lambda_i$, where, here and throughout, by $\xi_i$ we denote a random variable with the distribution of any of $\xi_i(k)$. Note that arbitrary dependence between random variables with different values of $i$ is allowed.

We will assume also that random variables $\eta_i(k)$ take values $0$ and $1$ and are such that, on average, they maximise a global utility function in the following sense. Denote
$$
\psi_i(\bar{x}) = \E(\eta_i(k)|\bar{X}(k) = \bar{x})
$$
and assume that
$\psi_i(\ox) \in [0,1]$ for all $\ox$, $\psi_i(\ox) = 0$ if $x_i=0$, and 
\begin{equation} \label{eq:fainess_def_1}
\bar{\psi}(\bar{x}) \in \argmax_{\bar{\mu} \in \C} \sum_i g(x_i) h(\mu_i),
\end{equation}
where the set $\mathcal C$ is compact and coordinate-convex (i.e. if a vector $\bar\mu$ belongs to $\C$ and $\bar\mu^* \le \bar\mu$ coordinate-wise, then $\bar \mu^* \in \C$). We impose, in addition,

{\it Condition (H):} the function $h: [0,\infty) \to \mathbb{R}$ is strictly increasing, differentiable and concave (both the cases $\lim_{y\downarrow 0} h(y) = h(0) > -\infty$ and $\lim_{y\downarrow 0} h(y) = -\infty$ are allowed);

and

{\it Condition (G):} the function $g: \mathbb{Z}_+ \to [0,\infty)$ is strictly increasing and such that
\begin{equation} \label{eq:cond_Delta}
\frac{g(y)}{\Delta(y)} \to \infty
\end{equation}
as $y \to \infty$, where $\Delta(y) = g(y+1) - g(y)$. Note that condition \eqref{eq:cond_Delta} is equivalent to
\begin{equation} \label{eq:cond_Delta_cor}
\frac{g(y+1)}{g(y)} \to 1,
\end{equation}
as $y \to \infty$.


\begin{remark} Note that for what is usually referred to as $\alpha$-fair algorithms, $g(y) = y^{\alpha}$ and $h(y) = \frac{y^{1-\alpha}}{1-\alpha}$ with $\alpha > 0$, $\alpha \neq 1$, or $g(y) = y$ and $h(y)=\log y$ , so all the above conditions hold.

The class of functions satisfying the conditions above is however much wider. It includes, for instance functions $g(y) = e^{\log^\beta y}$ with $\beta > 0$ and $g(y) = e^{y^\gamma}$ with~$0<\gamma<1$.
\end{remark}

Throughout the section, we are going to assume that
\beql{eq-sub-critical}
\mbox{There exists $\on \in \mathcal C$ such that $\ol < \on$.}
\end{equation}

We will also denote
\begin{equation} \label{eq:def_G}
G(z) = \sum_{y=0}^z g(y),
\end{equation}
and
\begin{equation} \label{eq:def_F}
F(\bar{y}) = \sum_i h'(\nu_i) G(y_i).
\end{equation}

\subsection{Stability.} \label{sec:stability_discrete_finite}

In this section we prove that the utility-maximising algorithms described in the previous section are stable as long as the average arrivals $\ol$ are within the set $\C$. Our proof does not use fluid limits which have been the standard tool for proving stability of algorithms of this type. The advantages and disadvantages of our approach are described in the Introduction.

\begin{theorem} \label{thm:fairness_stability}
Consider the discrete-time model in Section~\ref{sec:model_discrete_finite} and
assume that $\xi_i$ is a Bernoulli random variable with $\E(\xi_i) = \lambda_i$. Assume that the vector $\ol$ is such that condition \eqn{eq-sub-critical} holds. Then the Markov chain $\{\bar{X}(k), ~k=0,1,\ldots\}$ is positive recurrent.
\end{theorem}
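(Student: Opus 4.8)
The plan is to verify the Foster--Lyapunov positive-recurrence criterion directly, with Lyapunov function $V := F$ from \eqn{eq:def_F}, that is $V(\bar x) = \sum_i h'(\nu_i)\, G(x_i)$, where $G$ is the discrete primitive of $g$ from \eqn{eq:def_G} and $\bar\nu$ is the vector from \eqn{eq-sub-critical}. This is the analogue for general $(g,h)$ of the power-law Lyapunov functions used for $\alpha$-fair allocations; the weights $h'(\nu_i)$ are chosen precisely so that the first-order comparison with $\bar\nu$ in the drift estimate is an exact inequality rather than an asymptotic statement. Note $V \ge 0$ and $V$ is finite-valued, since $h$ being strictly increasing and concave forces $h'(\nu_i) \in (0,\infty)$ for the interior point $\nu_i > \lambda_i \ge 0$ of the domain of $h$, while $G \ge 0$.

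First I would estimate the one-slot drift of $V$. Since $\xi_i$ is Bernoulli and $\eta_i \in \{0,1\}$, each $X_i$ changes by at most $1$ per slot, so $G(X_i(k+1)) - G(X_i(k)) \in \{-g(X_i(k)),\, 0,\, g(X_i(k)+1)\}$; using that the probability of an increase of $X_i$ minus that of a decrease equals $\lambda_i - \psi_i(\bar x)$, that an increase has probability at most one, and that $\Delta(z) = g(z+1) - g(z) \ge 0$, one obtains, writing $\psi_i = \psi_i(\bar x)$,
\[
\E\big[G(X_i(k+1)) - G(X_i(k)) \mid \bar X(k) = \bar x\big] \;\le\; g(x_i)(\lambda_i - \psi_i) + \Delta(x_i).
\]
Multiplying by $h'(\nu_i) > 0$ and summing over $i$,
\[
\E\big[V(\bar X(k+1)) - V(\bar X(k)) \mid \bar X(k) = \bar x\big] \;\le\; \sum_i h'(\nu_i)\, g(x_i)(\lambda_i - \psi_i) \;+\; \sum_i h'(\nu_i)\, \Delta(x_i).
\]

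Next I would turn the utility-maximising property \eqn{eq:fainess_def_1} into a negative drift. Since $\bar\psi(\bar x)$ maximises $\sum_i g(x_i) h(\mu_i)$ over $\bar\mu \in \C$ and $\bar\nu \in \C$, we have $\sum_i g(x_i)\big(h(\psi_i) - h(\nu_i)\big) \ge 0$; concavity of $h$ gives $h(\psi_i) - h(\nu_i) \le h'(\nu_i)(\psi_i - \nu_i)$ for each $i$, so multiplying by $g(x_i) \ge 0$ and summing yields $\sum_i g(x_i) h'(\nu_i)(\nu_i - \psi_i) \le 0$. Writing $\lambda_i - \psi_i = (\lambda_i - \nu_i) + (\nu_i - \psi_i)$ and setting $c := \min_i h'(\nu_i) > 0$, $\delta := \min_i(\nu_i - \lambda_i) > 0$ (finite and positive by finiteness of $\N$ and \eqn{eq-sub-critical}),
\[
\sum_i h'(\nu_i)\, g(x_i)(\lambda_i - \psi_i) \;\le\; \sum_i h'(\nu_i)\, g(x_i)(\lambda_i - \nu_i) \;\le\; -\, c\,\delta \sum_i g(x_i).
\]
Combining the three displays, the drift of $V$ at $\bar x$ is at most $\sum_i h'(\nu_i)\Delta(x_i) - c\delta \sum_i g(x_i)$. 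By Condition (G), $\Delta(y)/g(y) \to 0$, so, splitting each $\Delta(x_i)$ according to whether $x_i$ exceeds a large threshold, $\sum_i h'(\nu_i)\Delta(x_i) \le \tfrac{c\delta}{2}\sum_i g(x_i) + c'$ for some constant $c'$; hence the drift is at most $-\tfrac{c\delta}{2}\sum_i g(x_i) + c'$, which is $\le -\epsilon$ for some $\epsilon > 0$ for all $\bar x$ outside a finite set $B$, and is finite on $B$ because increments of $V$ are bounded from any fixed state. As the chain is irreducible, Foster's criterion yields positive recurrence of $\{\bar X(k)\}$.

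I expect the main obstacle to be exactly this third step: extracting a state-space-wide negative drift from utility maximisation without any ($0$-)homogeneity of $\bar\psi(\cdot)$ -- the very feature that fluid-limit arguments exploit. The device that makes it work is pairing the coefficients $h'(\nu_i)$ in $V$ with the pointwise concavity inequality $h(\psi_i) - h(\nu_i) \le h'(\nu_i)(\psi_i - \nu_i)$, so that the comparison with the target point $\bar\nu$ is exact. The Bernoulli assumption on $\xi_i$ enters only in the first step: it keeps the per-slot change of $G(X_i)$ equal to $\pm g$ evaluated at the current or next value, so that the error term is $O(\Delta(x_i)) = o(g(x_i))$ by Condition (G); for heavier-tailed arrivals this term would instead grow like $g(x_i + \xi_i) - g(x_i)$, which is why that case is treated separately in Theorem~\ref{thm:bound_third}. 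Note finally that coordinate-convexity of $\C$ plays no role in this argument -- only $\bar\nu \in \C$ and the maximising property are used -- while compactness of $\C$ merely guarantees that the argmax in \eqn{eq:fainess_def_1} is attained.
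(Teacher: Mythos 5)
Your proposal is correct and follows essentially the same route as the paper: the Lyapunov function $F(\bar x)=\sum_i h'(\nu_i)G(x_i)$, the one-step drift bound $\sum_i h'(\nu_i)\bigl(g(x_i)(\lambda_i-\psi_i)+\Delta(x_i)\bigr)$ exploiting the Bernoulli/$\{0,1\}$ increments, the concavity inequality $\sum_i g(x_i)h'(\nu_i)(\nu_i-\psi_i)\le 0$ extracted from the utility-maximising property, and Condition (G) to absorb the $\Delta$ terms outside a finite set. Your packaging of the final step (splitting $\Delta(x_i)$ at a threshold to get drift $\le -\tfrac{c\delta}{2}\sum_i g(x_i)+c'$) is a slightly cleaner rendition of the paper's argument with the constants $C_1,C_2,C_3,Y$, but it is the same proof.
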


\begin{proof}[Proof of Theorem \ref{thm:fairness_stability}.]

We will use the standard Lyapunov-Foster criterion \cite{Foster1953}. Fix $\varepsilon > 0$ such that $\lambda_i < \nu_i - \varepsilon$ for all $i$. Note that, due to \eqref{eq:fainess_def_1} and the concavity of the function $h$,
\begin{equation} \label{eq:fairness_inequality}
0 \le \sum_i g(x_i)(h(\psi_i(\bar{x})) - h(\nu_i)) \le \sum_i g(x_i)h'(\nu_i)(\psi_i(\bar{x})-\nu_i).
\end{equation}
We are going to consider
$$
\E\left(F(\bar{X}(1)) - F(\bar{X}(0))|\bar{X}(0) = \bar{x}\right) = \E\left(F(\bar{x}+\bar{\xi}(0) - \bar{\eta}(0))|\bar{X}(0) = \bar{x}\right) - F(\bar{x}).
$$
In what follows we are going to assume that $\bar{X}(0) = \bar{x}$ is fixed and will drop the dependence on this event. We will also write $\xi_i$ and $\eta_i$ instead of $\xi_i(0)$ and $\eta_i(0)$, for simplicity. We can write
\begin{align}
& \E(F(\bar{x} + \bar{\xi} - \bar{\eta})) - F(\bar{x}) = \E\left(\sum_i G(x_i+\xi_i-\eta_i) h'(\nu_i)\right) - \sum_i G(x_i)h'(\nu_i) \notag
\\ & =\sum_i h'(\nu_i)\biggl(\lambda_i \psi_i(\ox) G(x_i) + (1-\lambda_i)(1-\psi_i(\ox))G(x_i) \notag
\\ & + \lambda_i (1-\psi_i(\ox))G(x_i+1) + (1-\lambda_i)\psi_i(\ox) G(x_i-1) - G(x_i)\biggr) \notag
\\ & = \sum_i h'(\nu_i)\biggl(\lambda_i (1-\psi_i(\ox))(G(x_i+1) - G(x_i)) + (1-\lambda_i) \psi_i(\ox)(G(x_i-1)-G(x_i))\biggr) \notag
\\ & = - \sum_i h'(\nu_i) \lambda_i \psi_i(\ox)(G(x_i+1)+G(x_i-1)-2G(x_i)) \notag
\\ & + \sum_i h'(\nu_i)\bigl(\lambda_i(G(x_i+1)-G(x_i)) + \psi_i(\ox)(G(x_i-1)-G(x_i))\bigr) \notag
\\ & =  - \sum_i h'(\nu_i) \lambda_i \psi_i(\ox)(g(x_i+1) - g(x_i)) + \sum_i h'(\nu_i)\bigl(\lambda_i g(x_i+1) - \psi_i(\ox)g(x_i)\bigr) \notag
\\ & \le \sum_i h'(\nu_i)\bigl(\lambda_i g(x_i+1) - \psi_i(\ox)g(x_i)\bigr) \label{eq:discrete_cont}
\\ & = \sum_i h'(\nu_i) (\lambda_i - \psi_i(\ox)) g(x_i)  +  \sum_i h'(\nu_i)\lambda_i(g(x_i+1)-g(x_i)) \notag
\\ & = \sum_i h'(\nu_i) (\lambda_i - \nu_i) g(x_i) + \sum_i h'(\nu_i) (\nu_i - \psi_i(\ox)) g(x_i) + \sum_i h'(\nu_i)\lambda_i \Delta(x_i) \notag
\\ & \le - \varepsilon \sum_i h'(\nu_i) g(x_i) + \sum_i h'(\nu_i) \Delta(x_i) = \sum_i h'(\nu_i) (-\varepsilon g(x_i) + \Delta(x_i)), \notag
\end{align}
where in the last inequality we used \eqref{eq:fairness_inequality}.

This, together with \eqref{eq:cond_Delta}, implies that if $F(\bar{x})$ is large, then its expected drift may be made arbitrarily small, and certainly negative and bounded away from zero (and then the classical Lyapunov-Foster stability criterion in \cite{Foster1953} applies). 

Indeed, if $F(\bar{x})$ is larger than a certain constant, $C$ say, then there exists $i$ such that $h'(\nu_i) G(x_i) > C/N$, which, due to the fact that $G$ is strictly increasing, implies that $x_i > C_1$ for a certain constant $C_1$.

Condition \eqref{eq:cond_Delta} implies that there exists a constant $C_2$ such that
$$
\frac{g(y)}{\Delta(y)} \ge C_2
$$
for all $y$. If $\varepsilon C_2 > 1$, then the drift of $F$ is always negative. Assume now $\varepsilon C_2 < 1$. For any $C_3$ there exists $Y$ such that
$$
\frac{g(y)}{\Delta(y)} \ge C_3
$$
for all $y > Y$. We can always choose $C_3$ and $C$ so that $C_3 > 1/\varepsilon$ and $C_1 > Y$, and then the drift of $F$ may be bounded from above by
\begin{align*}
& \sum_i h'(\nu_i) g(x_i) \left(-\varepsilon + \frac{1}{C_2} \I(x_i \le Y) + \frac{1}{C_3} \I(x_i > Y)\right) \\ & \le (N-1) h_u g(Y) \left(-\varepsilon + \frac{1}{C_2}\right) + h_l g(C_1) \left(-\varepsilon + \frac{1}{C_3}\right),
\end{align*}
where $h_u = \max_i h'(\nu_i)$ and $h_l = \min_i h'(\nu_i)$. It is clear that we can always choose $C_1$ such that the above is negative.
\end{proof}

\subsection{Moment bounds.} 
\label{sec:bounds_discrete}

Once stability is established, one can employ arguments similar to those used in the proof of Theorem \ref{thm:fairness_stability} to obtain bounds on some moments of the stationary distributions of queue states. The stationary regime exists under conditions of Theorem \ref{thm:fairness_stability}; in this section we will write $\bar X$ to represent a random vector with the distribution equal to that of $\bar X(k)$ in the stationary regime.

\begin{theorem} \label{thm:fair_tight}
Assume that all conditions of Theorem \ref{thm:fairness_stability} hold and fix $\varepsilon > 0$ such that $\lambda_i < \nu_i - \varepsilon$ for each $i$. Then
$$
\sum_i h'(\nu_i) \E g(X_i) \le \frac{1}{\varepsilon} \sum_i h'(\nu_i) \E \Delta(X_i).
$$

\end{theorem}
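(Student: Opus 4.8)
The plan is to reuse the drift computation from the proof of Theorem \ref{thm:fairness_stability}, but now take expectations in the stationary regime rather than bounding a single-step drift. In stationarity, $\bar X(1)$ and $\bar X(0)$ have the same distribution, so $\E F(\bar X(1)) = \E F(\bar X(0))$, hence $\E\left(F(\bar X(1)) - F(\bar X(0))\right) = 0$ — provided these expectations are finite, which is the first thing I would need to justify.

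Taking $\bar X(0) = \bar X$ distributed according to the stationary law and averaging the pointwise identity derived in the proof of Theorem \ref{thm:fairness_stability}, the conditional drift $\E(F(\bar x + \bar\xi - \bar\eta)) - F(\bar x)$ was shown there to be bounded above by $\sum_i h'(\nu_i)(-\varepsilon g(x_i) + \Delta(x_i))$. Integrating against the stationary distribution of $\bar X$ and using $\E\left(F(\bar X(1)) - F(\bar X(0))\right) = 0$ gives
$$
0 \le \sum_i h'(\nu_i)\left(-\varepsilon\, \E g(X_i) + \E \Delta(X_i)\right),
$$
which rearranges to exactly the claimed inequality $\sum_i h'(\nu_i)\E g(X_i) \le \frac{1}{\varepsilon}\sum_i h'(\nu_i)\E\Delta(X_i)$. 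So the algebra is immediate once the identity $\E F(\bar X(1)) = \E F(\bar X(0))$ is licensed.

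The main obstacle is precisely this integrability/justification step: one cannot naively write $\E F(\bar X(1)) - \E F(\bar X(0)) = 0$ without knowing $\E F(\bar X) < \infty$, and a priori the stationary distribution could have infinite $g$-moments, in which case the asserted bound would be vacuous or the manipulation illegitimate. I would handle this in the standard way: introduce truncated/localised test functions, e.g. apply the drift inequality on the stopped/bounded region $\{x_i \le M \text{ for all } i\}$ or work with $F_M(\bar y) = \sum_i h'(\nu_i) G(y_i \wedge M)$, derive the inequality with $X_i$ replaced by $X_i \wedge M$, and then let $M \to \infty$ using monotone convergence on the left-hand side (and controlling the right-hand side via $\Delta(y)/g(y) \to 0$, Condition (G), so that $\E\Delta(X_i \wedge M)$ stays controlled by a vanishing fraction of $\E g(X_i \wedge M)$ plus a constant). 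A cleaner alternative is to invoke a general moment-bound lemma for positive recurrent Markov chains (of Foster-Lyapunov type with an explicit drift function), which directly yields $\E g(X_i) < \infty$ together with the stated bound; either route closes the argument, with the truncation bookkeeping being the only genuinely delicate part.
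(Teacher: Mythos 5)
Your algebraic core is exactly the paper's: reuse the drift bound from the proof of Theorem \ref{thm:fairness_stability} and average it against the stationary law. You also correctly identify the only real issue, namely that one cannot write $\E F(\bar X(1))-\E F(\bar X(0))=0$ without knowing $\E F(\bar X)<\infty$. Where you diverge is in how this is resolved, and your primary fix has a genuine gap. Truncating the test function coordinate-wise, $F_M(\bar y)=\sum_i h'(\nu_i)G(y_i\wedge M)$, does not yield the drift inequality with $X_i$ replaced by $X_i\wedge M$: for coordinates with $x_i\ge M$ the increment of $G(x_i\wedge M)$ is (essentially) zero, so those coordinates drop out of the drift, whereas the negative term $-\varepsilon\sum_i h'(\nu_i)g(x_i)$ is obtained from the utility-maximisation inequality \eqref{eq:fairness_inequality}, which is a single inequality coupling \emph{all} coordinates. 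Restricting the sum to $\{i: x_i<M\}$ destroys that cancellation, because the omitted coordinates may be precisely the ones with $\psi_i<\nu_i$ that make the full sum nonnegative. So the ``truncation bookkeeping'' is not merely delicate; as sketched it fails. Your alternative route --- invoking a general Foster--Lyapunov comparison theorem ($PV-V\le -f+s$ with $f,s\ge 0$ implies $\pi(f)\le\pi(s)$) --- is valid and closes the argument cleanly, since here $f=\varepsilon\sum_i h'(\nu_i)g(x_i)$ and $s=\sum_i h'(\nu_i)\Delta(x_i)$ are both nonnegative.

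The paper avoids both truncation and the citation by a third device: start the chain from an arbitrary \emph{fixed} initial state, so that (with Bernoulli arrivals) $\E F(\bar X(k))<\infty$ for every finite $k$ automatically; observe that the one-step drift bound $-\varepsilon\sum_i h'(\nu_i)g(x_i)+\sum_i h'(\nu_i)\Delta(x_i)$ is uniformly bounded above by a constant $c$; then let $k\to\infty$, using convergence in distribution to the stationary law, Skorohod representation and Fatou's lemma (applied to the bounded-above drift) to pass the bound to the stationary expectation; finally the $\limsup_k$ of the telescoping differences must be $\ge 0$ because $F\ge 0$. You may wish to note that this argument delivers the inequality without ever assuming $\E g(X_i)<\infty$ for the stationary chain, which is exactly the point your stationarity-based identity needs but cannot supply on its own.
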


\begin{proof}[Proof of Theorem \ref{thm:fair_tight}.]


Consider the process with an arbitrary fixed initial state $\bar X(0)$. 
Then, due to the assumptions on the input flows, $\E(F(\bar{X}(k))) < \infty$ for any $k\ge 0$.
By Theorem \ref{thm:fairness_stability} the process is stable, and therefore $\bar X(k)$ converges in distribution to $\bar X$. 
Following the lines of \eqref{eq:discrete_cont}, we have the following drift estimate:
\begin{align}
& \E \bigl(F(\bar{X}(k+1)) - F(\bar{X}(k)) ~|~\bar{X}(k) = \bar{x}\bigr) \notag
\\ & \le  \sum_i h'(\nu_i)\bigl(\lambda_i g(x_i+1) - \psi_i(\ox) g(x_i)\bigr) \label{eq:bound_discrete_cont}
\\ & \le - \varepsilon \sum_i h'(\nu_i) g(x_i) + \sum_i h'(\nu_i) \Delta(x_i) \le c, \notag
\end{align}
where $c$ is a fixed finite constant, and the last inequality follows from the properties of function $g$. Indeed, thanks to \eqref{eq:cond_Delta}, there exists a finite $Y$ such that
$$
\frac{g(y)}{\Delta(y)} \ge 1/\varepsilon
$$
for all $y \ge Y$. Denote by
$$
C = \min_{y \le Y} \frac{g(y)}{\Delta(y)}.
$$
Then
$$
\E \bigl(F(\bar{X}(k+1)) - F(\bar{X}(k)) ~|~\bar{X}(k) = \bar{x}\bigr) \le \left(-\varepsilon + \frac{1}{C}\right) \sum_i h'(\nu_i)g(x_i) \I(x_i < Y).
$$
Then $c$ may be taken to be
$$
c= 
\begin{cases}
0, \quad C \ge 1/\varepsilon, \\
N \left(-\varepsilon + \frac{1}{C}\right) g(Y) \max_i h'(\nu_i), \quad C < 1/\varepsilon
\end{cases}
$$

Then, we obtain
$$
\E[F(\bar{X}(k+1))-F(\bar{X}(k))] \le \E\left[- \varepsilon \sum_i h'(\nu_i) g(X_i(k)) + \sum_i h'(\nu_i) \Delta(X_i(k))\right].
$$
Recall that $\bar X(k)$ converges in distribution to $\bar X$. Using Skorohod representation and the last inequality in \eqn{eq:bound_discrete_cont}, we can apply Fatou's Lemma to obtain $\limsup_{k\to\infty}$ of the right-hand side of the above display. Thus, we obtain
$$
\limsup_{k\to\infty} \E[F(\bar{X}(k+1))-F(\bar{X}(k))] \le \E\left[- \varepsilon \sum_i h'(\nu_i) g(X_i) + \sum_i h'(\nu_i) \Delta(X_i)\right].
$$
The left-hand side above must be greater or equal to $0$ (because otherwise we would have $\E F(\bar{X}(k)) \to -\infty$).
\end{proof}

In certain particular cases (such as, e.g., when $g(x) = x^\alpha$ with an integer $\alpha$) one can significantly weaken the assumptions on random variables $\xi_i$. We provide the following theorem as an example of this, and also as we will use exactly this choice of functions $g$ and $h$ in section \ref{sec:infinite_single} to prove stability of an infinite network. We note however that we do rely on fluid limits in the proof of the following result.

\begin{theorem} \label{thm:bound_third}
For a discrete-time model defined in Section \ref{sec:model_discrete_finite}, assume that $g(y) = y^2$ and $h(y) = -y^{-1}$. Assume also that $\xi_i$ is a non-negative integer-valued random variable with $\E(\xi_i^3)<\infty$ and $\E(\xi_i) = \lambda_i$ with the vector $\ol$ such that condition \eqn{eq-sub-critical} holds. Then the Markov chain $\{\bar{X}(k), ~k=0,1,\ldots\}$ is stable.

Moreover, fix $\varepsilon > 0$ such that $\lambda_i < \nu_i - \varepsilon$ for all $i$. Then
$$
\varepsilon \sum_i \frac{\E(X_i^2)}{\nu_i^2} \le A \sum_i \frac{\E(X_i)}{\nu_i^2} + B,
$$
where $\bar{X}$ denotes a random element with the stationary distribution of $\bar{X}(\cdotp)$,
$$
A = 3 \sum_i \frac{\E(\xi_i^2) + \lambda_i (1-2 \lambda_i)}{\nu_i^2}
$$
and
$$
B = \sum_i \frac{\E(\xi_i^3) - \lambda_i + 3 \lambda_i^2 - 3 (1-2\lambda_i) (\lambda_i^2 - \lambda_i/2 + \E(\xi_i^2)/2)}{\nu_i^2}.
$$
\end{theorem}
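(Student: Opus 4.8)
The plan is to reuse, in the concrete setting $g(y)=y^2$, $h(y)=-1/y$, the two-step scheme of Theorems~\ref{thm:fairness_stability} and~\ref{thm:fair_tight}: a drift computation for a polynomial Lyapunov function, controlled by the utility-maximisation inequality \eqref{eq:fairness_inequality}, followed by a limiting (stationarity) argument. With these choices $h'(\nu_i)=1/\nu_i^2$, $G(z)=\sum_{y=0}^z y^2 = z(z+1)(2z+1)/6$ is a cubic polynomial, and $F(\bar y)=\sum_i \nu_i^{-2}G(y_i)$ grows like $\|\bar y\|^3$. The difference from Theorems~\ref{thm:fairness_stability}--\ref{thm:fair_tight} is that the per-slot arrivals $\xi_i$ need no longer be Bernoulli, only non-negative, integer valued with $\E(\xi_i^3)<\infty$; since $g$ is a power, $G$ has degree $3$, and this is precisely the moment order that keeps $\E F(\bar X(k))$ finite.

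For stability I would observe that, since $g(y)=y^2$, the allocation $\bar\psi$ in \eqref{eq:fainess_def_1} is $0$-homogeneous --- this is precisely the $\alpha$-fair allocation with $\alpha=2$ --- so the classical fluid-limit route to stability applies. One shows that the fluid-scaled processes are tight and that any fluid limit obeys $\dot x_i = \lambda_i - \mu_i^*(\bar x)$ with $\bar\mu^*(\bar x)\in\argmax_{\bar\mu\in\C}\sum_i x_i^2(-1/\mu_i)$; using $\on\in\C$ (which follows from \eqref{eq-sub-critical} and coordinate-convexity of $\C$) together with \eqref{eq:fairness_inequality} at the fluid level (i.e.\ $\sum_i \nu_i^{-2}x_i^2(\mu_i^*(\bar x)-\nu_i)\ge 0$), the function $\sum_i \nu_i^{-2}x_i^3$ has derivative $3\sum_i\nu_i^{-2}x_i^2(\lambda_i-\mu_i^*(\bar x))\le -3\varepsilon\sum_i\nu_i^{-2}x_i^2<0$ along fluid trajectories away from the origin, so the fluid model is stable and positive recurrence of $\{\bar X(k)\}$ follows by \cite{RybSt92,Dai95,St95}. (Only $\E(\xi_i)<\infty$ is needed here; the third moment enters only in the moment bound. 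Stability would in fact also follow directly from the drift estimate below via the Lyapunov--Foster criterion, but invoking fluid limits keeps this half short.)

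For the moment bound I would follow the proof of Theorem~\ref{thm:fair_tight} almost verbatim. Run the chain from an arbitrary fixed initial state; then $\E F(\bar X(k))<\infty$ for every finite $k$, because $X_i(k)\le X_i(0)+\sum_{j<k}\xi_i(j)$, $G$ is cubic, and $\E(\xi_i^3)<\infty$. Next compute the one-step drift exactly: conditionally on $\bar X(k)=\bar x$, the variable $\eta_i$ is Bernoulli with mean $\psi_i(\bar x)$ and independent of $\xi_i$, so, $G$ being a cubic polynomial,
\[
\E\bigl(F(\bar X(k+1))-F(\bar X(k))\mid \bar X(k)=\bar x\bigr)=\sum_i \nu_i^{-2}\,\E\bigl(G(x_i+\xi_i-\eta_i)-G(x_i)\bigr)
\]
can be written, coordinate by coordinate, as a polynomial of degree $2$ in $x_i$ whose coefficients are explicit linear combinations of $1,\lambda_i,\E(\xi_i^2),\E(\xi_i^3)$ and $\psi_i(\bar x)$. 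The coefficient of $x_i^2$ is $\nu_i^{-2}(\lambda_i-\psi_i(\bar x))$; summing over $i$ and applying \eqref{eq:fairness_inequality} specialised to the present $g,h$ (which gives $\sum_i \nu_i^{-2}x_i^2(\psi_i(\bar x)-\nu_i)\ge 0$, hence $\sum_i \nu_i^{-2}x_i^2(\lambda_i-\psi_i(\bar x))\le \sum_i \nu_i^{-2}x_i^2(\lambda_i-\nu_i)\le -\varepsilon\sum_i \nu_i^{-2}x_i^2$) extracts the leading negative term. Bounding the remaining linear-in-$x_i$ and constant terms using only $0\le\psi_i(\bar x)\le 1$ and the finite moments of $\xi_i$ yields a drift estimate
\[
\E\bigl(F(\bar X(k+1))-F(\bar X(k))\mid \bar X(k)=\bar x\bigr)\le -\varepsilon\sum_i \frac{x_i^2}{\nu_i^2}+A\sum_i \frac{x_i}{\nu_i^2}+B ,
\]
with $A,B$ as in the statement, and whose right-hand side is bounded above by a finite constant. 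Finally, exactly as in Theorem~\ref{thm:fair_tight}: $\bar X(k)\Rightarrow\bar X$ by stability, so by the Skorohod representation and the reverse Fatou lemma (the displayed bound is uniformly bounded above) one gets $0\le\limsup_k\E[F(\bar X(k+1))-F(\bar X(k))]\le\E\bigl(-\varepsilon\sum_i \nu_i^{-2}X_i^2+A\sum_i \nu_i^{-2}X_i+B\bigr)$, the first inequality holding because otherwise $\E F(\bar X(k))\to-\infty$, contradicting $F\ge 0$. This both forces $\E(X_i^2)<\infty$ (the right-hand side is $-\infty$ otherwise) and, rearranged, is the asserted inequality.

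I expect no conceptual obstacle: the argument is the specialisation of Theorem~\ref{thm:fair_tight}'s method to a cubic Lyapunov function with general finite-third-moment arrivals. The one delicate part is the bookkeeping --- carrying the exact cubic expansion of $\E\bigl(G(x_i+\xi_i-\eta_i)-G(x_i)\bigr)$ through to identify the precise constants $A$ and $B$, and checking that $\E(\xi_i^3)<\infty$ is exactly what is needed to keep $\E F(\bar X(k))$ finite and to justify the Fatou step. This is precisely why this hypothesis (rather than the Bernoulli assumption of Theorems~\ref{thm:fairness_stability}--\ref{thm:fair_tight}) appears, and why the fluid-limit input is convenient for the stability half.
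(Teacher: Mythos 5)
Your overall strategy is sound and close in spirit to the paper's: stability via fluid limits (the paper cites \cite[Lemma 12]{SnSt2018} for exactly this), and the moment bound via a cubic Lyapunov/moment computation combined with the specialisation of \eqref{eq:fairness_inequality} to $g(y)=y^2$, $h'(\nu_i)=\nu_i^{-2}$, which is precisely how the leading term $-\varepsilon\sum_i\nu_i^{-2}\E(X_i^2)$ is extracted in the paper as well. Your route differs structurally in that you run the transient drift of $F$ and close with the Skorohod/reverse-Fatou argument of Theorem~\ref{thm:fair_tight}, whereas the paper works directly with the stationary process and equates $\E(X_i^l(k+1))=\E(X_i^l(k))$ for $l=1,2,3$; your route has the genuine advantage of delivering $\E(X_i^2)<\infty$ as a byproduct, so it avoids the paper's appeal to \cite{Meyn1995} for a priori moment finiteness and the subsequent truncation-at-$M$/continuity argument needed to remove the auxiliary assumption $\E(X_i^3)<\infty$.

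There is, however, one concrete step that fails as written: bounding the lower-order terms ``using only $0\le\psi_i(\ox)\le 1$'' does \emph{not} yield the constants $A$ and $B$ of the statement. Writing $\delta_i=\xi_i-\eta_i$ and $G(z)=z^3/3+z^2/2+z/6$, the coefficient of $x_i$ in $\E\bigl(G(x_i+\delta_i)-G(x_i)\mid\bar X(k)=\ox\bigr)$ is
\begin{equation*}
\E(\delta_i^2)+\E(\delta_i)=\E(\xi_i^2)+\lambda_i-2\lambda_i\psi_i(\ox),
\end{equation*}
so discarding $-2\lambda_i\psi_i(\ox)\le 0$ produces $\E(\xi_i^2)+\lambda_i$ in place of the stated $\E(\xi_i^2)+\lambda_i(1-2\lambda_i)$, i.e.\ a strictly larger $A$ (and correspondingly a different $B$). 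The paper obtains the exact constants by not bounding these terms pointwise at all: it uses the first- and second-moment balance equations in stationarity, $\E(\psi_i)=\lambda_i$ and $\E(X_i\psi_i)=\lambda_i\E(X_i)-\lambda_i^2+\lambda_i/2-\E(\xi_i^2)/2$ (equations \eqref{eq:third_aux_1}--\eqref{eq:third_aux_2}), to eliminate every occurrence of $\psi_i$ from the third-moment identity exactly. So your argument, as proposed, proves a correct inequality of the asserted form but with weaker constants; to recover the theorem verbatim you must either carry the $\psi_i$-dependent terms through to the stationary limit and then substitute the exact identities above, or simply restate the conclusion with your (larger) $A$ and $B$ — which would still suffice for the application in Theorem~\ref{thm:periodic}.
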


The specific functions $g(\cdot)$ and $h(\cdot)$ are such that the fluid limits 
of the process are well defined. We do use this fact to rely on previous results on stability and existence of moments, as will be seen shortly.

\begin{proof}[Proof of Theorem \ref{thm:bound_third}]

Positive recurrence of the Markov chain $\bar{X}(\cdot)$ under the assumptions (in fact the existence of only the first moments of $\xi_i$'s is sufficient for stability) of the theorem holds due to \cite[Lemma 12]{SnSt2018}, where the stability of the corresponding fluid limits is established.
(Note that, if one assumed convexity of the set $\mathcal C$, stability would follow from earlier results, see, e.g. \cite{Bonald2001,DeVeciana2001}; however, the convexity of the set $\mathcal C$ is not in fact necessary for stability results, which is pointed out in \cite{SnSt2018}). We will consider the stationary version of the process. The finiteness of the third moment of $\xi_i$ (along with stability of fluid limits) guarantees that $\E(X_i^2) < \infty$ (see \cite{Meyn1995}).

Due to stationarity, $\E(X_i(k+1)) = \E(X_i(k))$ and hence
\begin{equation} \label{eq:third_aux_1}
\E(\psi_i) = \lambda_i,
\end{equation}
where for simplicity we write $\psi_i$ instead of $\psi_i(\bar{X})$.

Note that 
\begin{equation} \label{eq:expectations}
\E(X_i^l \eta_i) = \E(\E(X_i^l \eta_i |\bar{X})) = \E(X_i^l \E(\eta_i|\bar{X})) = \E(X_i^l \psi_i)
\end{equation}
for any $l$. Note also that $\eta_i^l = \eta$ a.s. for any $l > 0$. Due to stationarity, we also have $\E(X_i^2(k+1)) = \E(X_i^2(k))$, which is equivalent to
 \begin{align*}
 0 & = \E(\xi_i^2) + \E(\psi_i) - 2\E(\xi_i) \E(\psi_i) + 2\E(\xi_i) \E(X_i) - 2 \E(X_i \psi_i) \\ & = \E(\xi_i^2) + \lambda_i - 2 \lambda_i^2 + 2 \lambda_i \E(X_i) - 2\E(X_i \psi_i),
\end{align*}
where we used \eqref{eq:third_aux_1}, and hence
\begin{equation} \label{eq:third_aux_2}
\E(X_i \psi_i) = \lambda_i \E(X_i) - \lambda_i^2 + \lambda_i/2 - \E(\xi_i^2)/2.
\end{equation}

Assume now that $\E(X_i^3) < \infty$ (we will demonstrate how to drop this additional assumption at the end of the proof). Then the equality of the third moments in stationarity implies
\begin{align}
0 & = \E(\xi_i^3) - \E(\psi_i) + 3\E(\xi_i) \E(\psi_i) - 3 \E(\xi_i^2) \E(\psi_i) + 3 \E(X_i \psi_i) - 3\E(X_i^2 \psi_i) + 3\E(X_i) \E(\xi_i^2) \notag \\ 
& + 3 \E(X_i^2) \E(\xi_i) - 6\E(X_i \psi_i) \E(\xi_i) \notag \\
& = \E(\xi_i^3) - \lambda_i + 3 \lambda_i^2 - 3 \lambda_i \E(\xi_i^2) - 3\E(X_i^2 \psi_i) + 3\E(X_i) \E(\xi_i^2) + 3 \lambda_i \E(X_i^2) \notag \\ 
& +3 (1-2\lambda_i) (\lambda_i \E(X_i) - \lambda_i^2 + \lambda_i/2 - \E(\xi_i^2)/2) \notag \\
& = 3 (\lambda_i \E(X_i^2) - \E(X_i^2 \psi_i)) + A_i \E(X_i) + B_i, \label{eq:third_aux_3}
\end{align}
where we used \eqref{eq:third_aux_1} and \eqref{eq:third_aux_2} and where
$$
A_i = 3 \E(\xi_i^2) + 3\lambda_i (1-2 \lambda_i)
$$
and
$$
B_i = \E(\xi_i^3) - \lambda_i + 3 \lambda_i^2 - 3 (1-2\lambda_i) (\lambda_i^2 - \lambda_i/2 + \E(\xi_i^2)/2).
$$
Due to \eqref{eq:fairness_inequality},
$$
0 \le \sum_i \frac{x_i^2}{\nu_i^2} (\psi_i(\ox) - \nu_i)
$$
for any $\ox$, and hence
\begin{align*}
\sum_i \frac{\lambda_i \E(X_i^2) - \E(X_i^2 \psi_i)}{\nu_i^2} & = \sum_i \frac{(\lambda_i - \nu_i) \E(X_i^2)}{\nu_i^2} + \sum_i \frac{\E((\nu_i - \psi_i) X_i^2)}{\nu_i^2} \\
& \le -\varepsilon \sum_i \frac{\E(X_i^2)}{\nu_i^2}.
\end{align*}
The statement of the Theorem now follows by dividing \eqref{eq:third_aux_3} by $\nu_i^2$ and summing over all $i$. 

We now show that the assumption $\E(X_i^3) < \infty$ can be dropped. Let $M < \infty$ and consider the system with arrivals given by $\xi_i^{(M)} = \min\{\xi_i,M\}$ instead of $\xi_i$. 
Of course, $\E \xi_i^{(M)} \le \E \xi_i$. 
Therefore, the system is stable for each $M$, and let us denote by $\bar{X}^{(M)}$ a random element which has its stationary distribution. 
For each $M$, $\E((X_i^{(M)})^3) < \infty$ (because $\E((\xi_i^{(M)})^4)<\infty$ and \cite{Meyn1995}), and the derivations above imply that
$$
\sum_i \frac{\E((X_i^{(M)})^2)}{\nu_i^2} \le A^{(M)} \sum_i \frac{\E(X_i^{(M)})}{\nu_i^2} + B^{(M)},
$$
with obvious expressions for $A^{(M)}$ and $B^{(M)}$. Since $\E((\xi_i^{(M)})^l) \to \E(\xi_i^l)$ as $M \to \infty$ for $l=1,2,3$, $A^{(M)} \to A$ and $B^{(M)} \to B$ as $M \to \infty$. 

It is easy to check that the sequence $\bar X^{(M)}(\cdot)$ and $\bar X(\cdot)$ satisfy the continuity property.
Indeed, if $\bar X^{(M)}(0) \Rightarrow \bar X(0)$,
we can use Skorohod representation to construct all (random) initial states $\bar X^{(M)}(0)$ and $\bar X(0)$ on a common probability space 
so that $\bar X^{(M)}(0) \to \bar X(0)$ w.p.1. Now, we augment that probability space in the following natural probability space, on which all processes $\bar X^{(M)}(\cdot)$ and $\bar X(\cdot)$
are constructed. The service process is driven by the (independent) sequence of i.i.d. random mappings from a queue length vector $\bar X$ to a service vector
$(\eta_i)$. The arrival process is driven by an (independent) sequence of i.i.d.  arrival vectors $(\xi_i)$.
All processes $\bar X^{(M)}(\cdot)$ and $\bar X(\cdot)$ are then constructed in exactly same, natural way, except in the $M$-th processes
the number of job arrivals is ``clipped'' at $M$, i.e. it is $\xi_i^{(M)} = \min\{\xi_i,M\}$. We directly observe that, w.p.1, 
for any time $k$ and all sufficiently large $M$, $\bar X^{(M)}(k) = \bar X(k)$.

Note that all $X_i^{(M)}$ have uniformly (in $M$) bounded second - and then also first -- moments.
Therefore, we can choose a subsequence of $M$, along which $\bar X^{(M)} \Rightarrow \tilde X$.
We now construct the stationary versions of the processes $\bar X^{(M)}(\cdot)$ (along the subsequence) and a process 
$\bar X(\cdot)$ with $\bar X(0)$ distributed as $\tilde X$, on a common probability space as described above.
Since the sequence $\bar X^{(M)}(\cdot)$ and $\bar X(\cdot)$ satisfy the continuity property,
we see that, w.p.1,
$\bar X^{(M)}(k) \to \bar X(k)$ for any $k$, and thus constructed $\bar X(\cdot)$  
is in fact stationary.
This, in turn, implies that $\bar{X}^{(M)} \Rightarrow \bar{X}$.

 It remains to rewrite the last display as
$$
\sum_i \frac{1}{\nu_i^2} \E [X_i^{(M)} - A^{(M)}/2]^2 \le B^{(M)}+\sum_i \frac{(A^{(M)})^2}{4 \nu_i^2},
$$ 
and apply Fatou's Lemma to obtain
$$
\sum_i \frac{1}{\nu_i^2} \E [X_i - A/2]^2 \le \liminf_{M\to\infty} \sum_i \frac{1}{\nu_i^2} \E [X_i^{(M)} - A^M/2]^2 \le B+\sum_i \frac{A^2}{4 \nu_i^2}.
$$
\end{proof}

\section{Stability analysis of infinite single-hop networks} 
\label{sec:infinite_single}

In this section we provide an application of our moment bounds to establishing stability of infinite networks considered in \cite{Baccelli2018} and \cite{SnSt2018}. The {\em stability of an infinite network} we define as the existence of a proper stationary distribution 
(with all queues finite with probability $1$). 

In our analysis of finite systems in the previous sections, only the average service rates (at a given time, given the system state) were of importance and any dependencies between the departures from different queues were not relevant. When we move to analysis of infinite systems, we still will not require that the departures in each time slot are independent (given the system state), but we do have to specify the departure (service) mechanism to make sure that the processes we consider satisfy the continuity and/or monotonicity properties. In particular, the continuity will be the key property which we need to make limit transitions from finite systems to infinite ones. 

For the motivation of the specific service mechanisms that we consider (in particular related to 
wireless networks), we refer the reader to \cite{Baccelli2018} and \cite{SnSt2018}.


\subsection{Model}
\label{sec-inf-model}

The queues (or nodes) are assumed to be located on a $d$-dimensional lattice, with the service rates given by 
\begin{equation} \label{eq:rates_baccelli}
\psi_i(\ox) = \frac{x_i}{\sum_{j \in \mathbb{Z}^d} a_{j-i} x_j},
\end{equation}
where $a_0 = 1$, $a_i = a_{-i} \ge 0$ for all $i \in \mathbb{Z}^d$ and $L = \sup\{|i|: a_i > 0\} < \infty$.
For each $i$, the nodes $j$ within the finite set $\mathcal{N}_i = \{j ~|~a_{j-i}>0\}$ are called {\em neighbours} of $i$. Note that $i\in \mathcal{N}_i$. 

Recall that the arrivals are driven by the set of independent random variables $\xi_i(k)$, 
which represent the number of arrivals into node $i$ at time $k$. The sets $\{\xi_i(k)\}$ are i.i.d. across $k$; and for each fixed $i$, 
$\xi_i(k)$ are i.i.d. across $k$. As before, we denote by $\xi_i$ the generic $\xi_i(k)$, and assume
$$
\E \xi_i^3 < \infty.
$$
We consider the following two service algorithms for the discrete-time case. Our results apply to both. 
(Again, see \cite{SnSt2018} for the motivation of the algorithms.)
Recall that $X_i(k)$ are the queue lengths at time $k$.

{\em Discrete-time service algorithm 1 (D1).} The algorithm is driven by the set of i.i.d. (across node indices $i$ and times $k$) random variables 
$\nu_i(k)$, distributed uniformly in $[0,1]$. The {\em access priority} of node $i$ at time $k$ is $\tau_i(k) = [-\log \nu_i]/X_i(k)$ -- it is exponentially distributed with mean $1/X_i(k)$. (The smaller the $\tau_i(k)$ the ``higher'' the priority.)
Then, node $i$ transmits in slot $k$, if $X_i(k)>0$ and 
$$
\tau_i(k) < \tau_j(k)/a_{j-i} ~~\mbox{for all}~~ j \in \mathcal{N}_i \setminus i.
$$
Note that the probability of node $i$ transmitting, conditioned on $\bar X(k)$, is exactly $\psi_i(\bar X(k))$, 
as required by \eqn{eq:rates_baccelli}. At the same time, the transmissions of the nodes at time $k$, even conditioned on $\bar X(k)$, are {\em not} independent (except in the degenerate case $\mathcal N_i = i$). In fact, in the case when all $a_i$ are either $1$ or $0$, neighbouring nodes can {\em never} transmit simultaneously. 

{\em Discrete-time service algorithm 2 (D2).} This algorithm is much simpler. 
It is also driven by the set of i.i.d. (across node indices $i$ and times $k$) random variables 
$\nu_i(k)$, distributed uniformly in $[0,1]$. Node $i$ transmits in slot $k$, if $X_i(k)>0$ and 
$$
\nu_i(k) < \psi_i(\bar X(k)).
$$
In other words, conditioned on $\bar X(k)$, the probabilities of nodes transmitting are exactly $\psi_i(\bar X(k))$ (as required by \eqn{eq:rates_baccelli}), and the transmissions are independent.

\subsection{Continuity and monotonicity}
\label{sec-mon-cont}

For the infinite system process, we will use continuity 
(as defined in Section \ref{sec:basic})
and monotonicity properties.

For a continuity property to be well-defined, a topology on the process state space needs to be specified.
A state of the process is a set $\bar X =\{X_i\}$ of the queue lengths, i.e. a function of $i$. On this state space (which is uncountable for an infinite system), we consider the natural topology of component-wise convergence.

We also consider the natural component-wise order relation $\bar X \le \bar X^*$ on the state space. With respect to this partial order,
it is easy to see that the process for the system defined above has the following {\em monotonicity} property: two versions of the process, such that $\bar X^*(0) \le \bar X(0)$, can be coupled (constructed on a common probability space), so that $\bar X^*(k) \le \bar X(k)$ at all times $k\ge 0$. We note that this monotonicity only holds for single-hop systems; it does {\em not} hold for the multi-hop system which we will consider later is Section~\ref{sec:infinite_multi}.

\subsection{Auxiliary system on a finite torus}

For a vector $\bar{N} = (N_1,\ldots, N_d)$, denote by $\mathcal{T} = \{i: i_k = -N_k,\ldots,N_k-1\} \subset \mathbb{Z}^d$ the finite subset of points, "wrapped around" to form a torus. Consider the auxiliary version of our system, defined on the finite 
torus $\mathcal{T}$, with the node neighborhood structure being that of the torus.
Denote by $n = \prod_k (2N_k)$ the number of points in $\mathcal{T}$. We will only consider vectors $\bar{N}$ such that $N_k > l$ for each $k$.

Along the lines of \cite[Lemma 11]{SnSt2018}, we can show that for any  such $\mathcal{T}$, the rates \eqref{eq:rates_baccelli} are in fact utility maximising in a certain set. Indeed, denote
$$
\mathcal{C} = \{\bar{\mu}: \quad \text{there exists} \quad \bar{p} \quad \text{such that} \quad \bar{\mu} \le \bar{\psi}(\bar{p}) \}.
$$
We can prove the following optimality result.
\begin{lemma} \label{lemma:fairness}
The rates \eqref{eq:rates_baccelli} are utility maximising (they satisfy relation \eqref{eq:fainess_def_1}) with the functions $g(y) = y^2$, $h(y) = -y^{-1}$ and the set $\mathcal{C}$.
\end{lemma}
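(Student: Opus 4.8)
The plan is to verify relation \eqref{eq:fainess_def_1} directly by a first-order optimality (Karush--Kuhn--Tucker) argument for the concave maximisation problem $\max_{\bar\mu \in \mathcal{C}} \sum_i g(x_i) h(\mu_i)$ with $g(y) = y^2$ and $h(y) = -y^{-1}$. First I would record the structure of the set $\mathcal{C}$: since $\mathcal{C} = \{\bar\mu : \exists\, \bar p \text{ with } \bar\mu \le \bar\psi(\bar p)\}$ and, by \eqref{eq:rates_baccelli}, $\bar\psi(\bar p)$ satisfies $\sum_i \psi_i(\bar p) / p_i \cdot (\text{something}) $ — more precisely $\sum_{i} \psi_i(\bar p)\, a_{j-i}\,(\dots)$ — I would first show that $\mathcal{C}$ coincides with the set of $\bar\mu \ge 0$ satisfying the linear constraints obtained by summing the defining relation of $\psi$ over neighbourhoods. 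Concretely, from $\psi_i(\bar p) = p_i / \sum_j a_{j-i} p_j$ one gets, for each $j$, that $\sum_{i \in \mathcal{N}_j} a_{i-j}\, \psi_i(\bar p) \le 1$ (this is the key identity already exploited in \cite[Lemma 11]{SnSt2018}), and conversely any $\bar\mu \ge 0$ with $\sum_{i \in \mathcal{N}_j} a_{i-j} \mu_i \le 1$ for all $j$ lies in $\mathcal{C}$. Thus $\mathcal{C}$ is a polytope (on the finite torus $\mathcal{T}$) described by these $n$ linear inequalities, and in particular it is compact and coordinate-convex.

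Next, with $\mathcal{C}$ identified as this polytope, I would write the KKT conditions for the strictly concave objective $\sum_i x_i^2 h(\mu_i) = -\sum_i x_i^2/\mu_i$: at the optimum $\bar\mu^\star$ there exist multipliers $\bar q = (q_j) \ge 0$ for the constraints $\sum_{i\in\mathcal{N}_j} a_{i-j}\mu_i \le 1$ such that for every $i$ with $\mu_i^\star > 0$,
\[
x_i^2 h'(\mu_i^\star) = x_i^2 (\mu_i^\star)^{-2} = \sum_{j \in \mathcal{N}_i} a_{i-j}\, q_j,
\]
together with complementary slackness. The claim is then that $\mu_i^\star = \psi_i(\bar x) = x_i / \sum_j a_{j-i} x_j$ satisfies these conditions with the explicit choice $q_j = \bigl(\sum_i a_{i-j} x_i\bigr)^{-2}\cdot(\text{normalising factor})$ — I expect $q_j = \bigl(\sum_{i} a_{i-j} x_i\bigr)^{0}$ is wrong and the right guess is $q_j$ proportional to $1$, so that $\sum_{j\in\mathcal{N}_i} a_{i-j} q_j$ telescopes to $x_i^2/\psi_i(\bar x)^2 = \bigl(\sum_j a_{j-i}x_j\bigr)^2$; one checks that taking $q_j \equiv 1$ (or the appropriate constant forced by the algebra) makes both the stationarity equation and complementary slackness hold, using $a_i = a_{-i}$ for the symmetry of the neighbourhood sums. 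Since the objective is strictly concave in each $\mu_i$ on $\mu_i>0$ and $\mathcal{C}$ is convex, these KKT conditions are sufficient for global optimality, so $\bar\psi(\bar x) \in \argmax_{\bar\mu\in\mathcal{C}} \sum_i g(x_i)h(\mu_i)$, which is exactly \eqref{eq:fainess_def_1}.

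The main obstacle I anticipate is the first step: correctly characterising $\mathcal{C}$ as a polytope with the "right" inequality description, and in particular verifying the converse inclusion (that every $\bar\mu\ge 0$ satisfying the neighbourhood inequalities is dominated by some $\bar\psi(\bar p)$), since this requires solving the fixed-point-type relation $\mu_i \le p_i/\sum_j a_{j-i}p_j$ for a suitable $\bar p$; on a finite torus this should follow from a compactness/continuity or explicit-construction argument as in \cite[Lemma 11]{SnSt2018}. A secondary technical point is pinning down the exact constant in the dual variables $q_j$ and checking complementary slackness — i.e. that the constraint $\sum_{i\in\mathcal{N}_j} a_{i-j}\psi_i(\bar x) \le 1$ need not be tight for all $j$, but is tight precisely where $q_j > 0$; this is a short computation once the polytope description is in hand. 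Everything else (compactness, coordinate-convexity, sufficiency of KKT) is routine, and the functions $g(y)=y^2$, $h(y)=-1/y$ trivially satisfy Conditions (G) and (H) as already noted in the Remark.
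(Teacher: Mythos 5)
Your proposal has a genuine gap at its very first step, and the rest of the argument depends on it. The claimed ``key identity'' $\sum_{i \in \mathcal{N}_j} a_{i-j}\, \psi_i(\bar p) \le 1$ is false, so $\mathcal{C}$ is \emph{not} the polytope you describe. Concretely, in one dimension with $a_0=a_1=a_{-1}=1$ take $p_{j-1}=p_{j+1}=1$ and $p_{j-2}=p_j=p_{j+2}=\varepsilon$: then $\psi_{j-1}(\bar p)+\psi_j(\bar p)+\psi_{j+1}(\bar p) = \tfrac{2}{1+2\varepsilon}+\tfrac{\varepsilon}{2+\varepsilon} \to 2$ as $\varepsilon \downarrow 0$, so $\bar\psi(\bar p)\in\mathcal{C}$ violates your neighbourhood constraint at $j$. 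Once the polytope description is gone, the KKT machinery has nothing to stand on: $\mathcal{C}$ as actually defined (the coordinate-down-closure of the image of $\bar\psi$) is not presented as convex anywhere in the paper --- indeed the paper emphasises repeatedly that convexity of $\mathcal{C}$ is \emph{not} assumed --- so stationarity plus complementary slackness would not be sufficient for global optimality even if you found the right multipliers (and your own text concedes the multipliers are a guess).

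The paper's route avoids all of this and is worth internalising. Since $h(y)=-1/y$ is increasing and every $\bar\mu\in\mathcal{C}$ satisfies $\bar\mu\le\bar\psi(\bar p)$ for some $\bar p$, the objective at $\bar\mu$ is dominated by the objective at $\bar\psi(\bar p)$; hence it suffices to show $\sum_i x_i^2/\psi_i(\bar x) \le \sum_i x_i^2/\psi_i(\bar p)$ for every $\bar p$. The left side equals $\sum_i x_i^2 + \sum_i\sum_{j\ne i} a_{j-i}x_i x_j$, while the right side equals $\sum_i x_i^2 + \tfrac12\sum_i\sum_{j\ne i} a_{j-i}\bigl(x_i^2\,\tfrac{p_j}{p_i} + x_j^2\,\tfrac{p_i}{p_j}\bigr)$ by symmetry $a_{j-i}=a_{i-j}$, and the elementary inequality $x_i^2\,\tfrac{p_j}{p_i}+x_j^2\,\tfrac{p_i}{p_j}\ge 2x_ix_j$ finishes the proof. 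No characterisation of $\mathcal{C}$ beyond its defining domination property, and no convexity, is needed. If you want to salvage a duality-flavoured argument you would first have to prove that maximising over $\mathcal{C}$ reduces to maximising over the image of $\bar\psi$, at which point you are back to the pairwise comparison above anyway.
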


\begin{remark}
Using the standard terminology of $\alpha$-fairness, Lemma \ref{lemma:fairness} states that the rates \eqref{eq:rates_baccelli} are $2$-fair in the set $\mathcal{C}$.
\end{remark}

\begin{proof}[Proof of Lemma \ref{lemma:fairness}]
Indeed, due to the definition of the set $\C$, for any $\bar{\mu} \in \C$,
$$
\sum_i x_i \left(\frac{\mu_i}{x_i}\right)^{-1} \ge \sum_i x_i \left(\frac{\psi_i(\bar{p})}{x_i}\right)^{-1}
$$
for the corresponding vector $\bar{p}$. Hence, it is sufficient to show that
$$
\sum_i x_i \left(\frac{\psi_i(\ox)}{x_i}\right)^{-1} \le \sum_i x_i \left(\frac{\psi_i(\bar{p})}{ x_i}\right)^{-1}
$$
for all vectors $\bar{p}$. Note that the LHS of the above is equal to 
$$\sum_i x_i \sum_{j \in \mathcal{T}} a_{j-i} x_j = \sum_i x_i^2 + \sum_i \sum_{j \in \mathcal{T}, j \neq i} a_{j-i} x_i x_j.$$ 
Consider now
\begin{align*}
\sum_i x_i \left(\frac{p_i}{(\sum_{j \in \mathcal{T}} a_{j-i} p_j) x_i}\right)^{-1} & = \sum_i x_i^2 \left(1+ \sum_{j \in \mathcal{T}, j \neq i} \frac{a_{j-i}p_j}{p_i}\right)
\\ & = \sum_i x_i^2 + \frac{1}{2} \sum_i \sum_{j \in \mathcal{T}, j \neq i} \left(x_i^2 \frac{a_{j-i} p_j}{p_i} + x_j^2 \frac{a_{i-j} p_i}{p_j}\right).
\end{align*}
For any $i$ and $j$,
$$
x_i^2 \frac{a_{j-i} p_j}{p_i} + x_j^2 \frac{a_{i-j} p_i}{p_j} = a_{j-i} \left(x_i^2 \frac{ p_j}{p_i} + x_j^2 \frac{p_i}{p_j}\right) \ge 2a_{j-i}  x_i x_j,
$$
where we used the symmetry of the sequence $a_i$. The equality in the above is possible if and only if $x_i^2 \frac{p_j}{p_i} = x_j^2 \frac{p_i}{p_j}$, which is equivalent to $\frac{p_i}{x_i} = \frac{p_j}{x_j}$. Therefore we obtain
$$
\sum_i x_i \left(\frac{p_i}{(\sum_{j \in \mathcal{T}} a_{j-i} p_j) x_i}\right)^{-1} \ge \sum_i x_i^2 + \sum_i \sum_{j \in \mathcal{T}, j \neq i} a_{j-i} x_i x_j,
$$
and the equality is possible if and only if $\frac{p_i}{x_i} = \frac{p_j}{x_j}$ for all $i$ and $j$. This implies that $\frac{p_i}{x_i}$ has to be a constant for each $i$.
\end{proof}

\begin{lemma} 
\label{rem:symmetric_fairness} 
If $\lambda_i = \lambda$ for each $i$, then the existence of $\on \in \C$ such that $\ol < \on$ is equivalent to the inequality $\lambda < \dfrac{1}{\sum_{j \in \mathcal{T}} a_j} = \dfrac{1}{\sum_{j \in \mathbb{Z}^d} a_j}$ for any vector $\bar{N}$ such that $N_k > L$ for all $k$.
\end{lemma}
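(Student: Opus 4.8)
# Proof Proposal for Lemma \ref{rem:symmetric_fairness}

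The plan is to establish the equivalence by analysing the structure of the set $\mathcal{C}$ in the symmetric case. First I would unwind the definition: $\bar\mu \in \mathcal{C}$ means $\mu_i \le \psi_i(\bar p) = p_i / \sum_{j\in\mathcal{T}} a_{j-i} p_j$ for some vector $\bar p$ of "virtual" queue lengths. The key observation is that $\mathcal{C}$ is determined by the maximal achievable rate vectors, which are exactly $\bar\psi(\bar p)$ themselves (since $\mathcal{C}$ is coordinate-convex, it is the downward closure of $\{\bar\psi(\bar p) : \bar p\}$). So the existence of $\bar\nu\in\mathcal{C}$ with $\bar\lambda = (\lambda,\dots,\lambda) < \bar\nu$ is equivalent to the existence of $\bar p$ with $\psi_i(\bar p) > \lambda$ for all $i\in\mathcal{T}$.

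Next I would prove the two directions. For sufficiency, suppose $\lambda < 1/\sum_{j} a_j$. Take $\bar p$ to be the constant vector $p_i \equiv 1$. Then, because the torus is vertex-transitive and $N_k > L$ guarantees that every neighbourhood $\mathcal{N}_i$ on the torus is a faithful copy of the neighbourhood structure on $\mathbb{Z}^d$, we get $\sum_{j\in\mathcal{T}} a_{j-i} = \sum_{j\in\mathbb{Z}^d} a_j$ for every $i$, hence $\psi_i(\bar p) = 1/\sum_{j} a_j > \lambda$ for all $i$. Thus $\bar\nu$ with $\nu_i = 1/\sum_j a_j$ lies in $\mathcal{C}$ and strictly dominates $\bar\lambda$; one can also perturb slightly to get strict inequality coordinate-wise if $\mathcal{C}$ is defined with non-strict inequality. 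For necessity, suppose $\psi_i(\bar p) > \lambda$ for all $i$ for some $\bar p$. Summing the inequalities $p_i > \lambda \sum_{j} a_{j-i} p_j$ over all $i\in\mathcal{T}$ and swapping the order of summation on the right using the symmetry $a_{j-i}=a_{i-j}$ and again the translation-invariance of the torus neighbourhood sums, I get $\sum_i p_i > \lambda \sum_j p_j \big(\sum_i a_{i-j}\big) = \lambda \big(\sum_j p_j\big)\big(\sum_{k} a_k\big)$. Since $\sum_i p_i > 0$, dividing yields $\lambda < 1/\sum_k a_k$. Finally, the equality $\sum_{j\in\mathcal{T}} a_j = \sum_{j\in\mathbb{Z}^d} a_j$ is immediate from $L = \sup\{|i| : a_i > 0\} < \infty$ together with $N_k > L$, since then the support of $(a_j)$ fits inside $\mathcal{T}$ without wrap-around overlap.

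The main subtlety — more a bookkeeping point than a genuine obstacle — is making the torus neighbourhood identification precise: one must check that the condition $N_k > L$ (equivalently $N_k > l$ as written earlier, with $L=l$) ensures that for each $i\in\mathcal{T}$ the map $j\mapsto j-i$ restricted to $\mathcal{N}_i$ is a bijection onto $\{k : a_k > 0\}$, so that all the sums $\sum_{j\in\mathcal{T}} a_{j-i}$ are genuinely equal to the full lattice sum $\sum_{k\in\mathbb{Z}^d} a_k$ and carry no spurious wrap-around terms. Once this is in place, both directions are short algebra. I would also remark that the summation argument in the necessity direction is essentially the same averaging trick used in the proof of Lemma \ref{lemma:fairness}, exploiting the symmetry of $(a_i)$, which keeps the presentation uniform.
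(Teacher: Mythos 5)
Your proposal is correct, and the sufficiency direction (take $\bar p\equiv 1$, so $\on=\bar\psi(\bar p)$ has all coordinates equal to $1/\sum_j a_j$) coincides with the paper's. The necessity direction, however, takes a genuinely different and in fact cleaner route. The paper sums the reciprocal inequalities $1/\lambda > \bigl(\sum_{j} a_{j-i}p_j\bigr)/p_i$ over $i\in\mathcal{T}$, splits off the diagonal term, symmetrises the off-diagonal double sum using $a_{j-i}=a_{i-j}$, and applies the inequality $\frac{p_j}{p_i}+\frac{p_i}{p_j}\ge 2$ to each pair --- the same AM--GM pairing trick as in the proof of Lemma \ref{lemma:fairness}. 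You instead clear denominators, sum $p_i > \lambda\sum_j a_{j-i}p_j$ over $i$, and swap the order of summation, using only that $\sum_{i\in\mathcal{T}} a_{j-i}=\sum_{k}a_k$ for every fixed $j$ (translation-invariance of the torus). This buys you two things: you avoid AM--GM altogether, and you do not actually need the symmetry $a_i=a_{-i}$ for this direction (your own remark that the argument is ``essentially the same averaging trick'' as in Lemma \ref{lemma:fairness} undersells this --- it is a different and more elementary trick). One small point worth making explicit in either version: the hypothesis $\psi_i(\bar p)>\lambda\ge 0$ forces $\sum_j a_{j-i}p_j>0$ (and $p_i>0$ when $\lambda>0$), which is what licenses clearing the denominator (or, in the paper's version, dividing by $p_i$); and the case $\lambda=0$ is trivial since then $\lambda<1/\sum_j a_j$ holds automatically. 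Your attention to the wrap-around issue ($N_k>L$ ensuring the torus sums equal the full lattice sum) is exactly the bookkeeping the paper leaves implicit.
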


\begin{proof}[Proof of Lemma \ref{rem:symmetric_fairness}]
Indeed, if $\lambda < \dfrac{1}{\sum_{j \in \mathcal{T}} a_j}$, we can take $\bar{p} = (1,\ldots,1)$ and $\on = \bar{\psi}(\bar{p})$ - such a vector clearly belongs to $\C$, and it is also clear that $\ol < \on$. In the opposite direction, assume that $\ol < \on$ such that $\on \in \C$ and fix the corresponding vector $\bar{p}$. Then
$$
\frac{1}{\lambda} > \frac{\sum_{j \in \mathcal{T}} a_{j-i} p_j}{p_i}
$$
for each $i \in \mathcal{T}$. If we add up these inequalities over all $i \in \mathcal{T}$, we obtain
\begin{align*}
\frac{n}{\lambda} & > \sum_{i \in \mathcal{T}} \frac{\sum_{j \in \mathcal{T}} a_{j-i} p_j}{p_i} = n + \sum_i \sum_{j \in \mathcal{T}, j \neq i} \frac{a_{j-i} p_j}{p_i}
\\ & = n + \frac{1}{2} \sum_i \sum_{j \in \mathcal{T}, j \neq i} \left(\frac{a_{j-i} p_j}{p_i} + \frac{a_{i-j} p_i}{p_j}\right) 
\\ & = n + \frac{1}{2} \sum_i \sum_{j \in \mathcal{T}, j \neq i} a_{j-i} \left(\frac{p_j}{p_i} + \frac{p_i}{p_j}\right) 
\\ & \ge n + n \sum_{j \in \mathbb{Z}^d, j \neq 0} a_j,
\end{align*}
which concludes the proof (recall that $a_0=1$).
\end{proof}

\subsection{Stability analysis} \label{sec:infinite_stability}

In this section we demonstrate how our results on moment bounds allow us to obtain a stability result for an infinite network, along with a second moment bound for a stationary distribution.

We will say that the arrival rates $\lambda_i$ are {\em periodic}, if the following holds: (a) the values of $\lambda_i$ are given for $i$ within the rectangle $\mathcal{I} = [0, \ldots, C_1-1] \times \ldots \times [0, \ldots, C_d-1]$ with some fixed positive integers $C_1,\ldots, C_d$; (b) for any $i\in  \mathbb{Z}^d$ and any $k=1,\ldots,d$, $\lambda_{i + C_k e_k} = \lambda_i$, where $e_k$ is the $k$-th unit coordinate vector (with $k$-th entry equal to $1$ and all other entries equal to zero). 
Similarly, we define periodicity of any other function of $i$.
We will say that random variables $\xi_i$ are {\em i.i.d. up to periodicity},
if they are all independent, and $\xi_{i + C_k e_k}$ and $\xi_i$ have identical distribution for any $i$ and $k$.

\begin{theorem} \label{thm:periodic} Consider periodic rates $\bar \lambda$.
Assume that $\xi_i$ are i.i.d. up to periodicity, and $\E \xi_i^3 < \infty$ for all $i$.
Assume that there exists a periodic $\bar \nu$ from the set
$$
\mathcal{C} = \{\bar{\mu}: \quad \text{there exists} \quad \bar{p} \quad \text{such that} \quad \bar \mu \le \bar \psi(\bar{p}) \}
$$
such that $\bar \lambda < \bar \nu$. Consider an infinite network with arrival rates $\bar \lambda' \le \bar \lambda$ and assume in addition that the per-slot (random) number of arrivals $\xi'_i$ is dominated by 
$\xi_i$ w.p.1, $\xi'_i \le \xi_i$.
Then this infinite network is stable, and there exists a stationary regime with finite second moments $\E X_i^2$ of the queue lengths.
\end{theorem}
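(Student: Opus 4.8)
The plan is to obtain the infinite-network stationary regime as a limit of the stationary regimes of the torus-truncated systems of Section 2.3, using the uniform second-moment bound from Theorem \ref{thm:bound_third} together with monotonicity and the continuity property. First I would reduce to the case $\bar\lambda' = \bar\lambda$ with $\xi_i' = \xi_i$: since the dynamics are monotone in the arrival streams (larger arrivals, coupled pathwise, give larger queues), any stationary regime for the larger system dominates a stationary regime for the smaller one, so it suffices to build the stationary regime for $\bar\lambda$ and then push it down by the pathwise coupling $\xi_i' \le \xi_i$. (Some care is needed: I would construct the dominated stationary regime as the w.p.1 limit, along the coupled construction, of the process started from the dominating stationary state, using monotonicity to get a.s. monotone convergence; the limit is stationary and has finite second moments by Fatou.)

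Next, for each torus $\mathcal{T} = \mathcal{T}(\bar N)$ with $N_k > L$, I would consider the finite system on $\mathcal{T}$ with the periodic arrival rates restricted to $\mathcal{T}$ — here one must choose $\bar N$ so that each $C_k \mid 2N_k$, so that periodicity is consistent with the wrap-around; this is possible along a subsequence $\bar N \to \infty$. By Lemma \ref{lemma:fairness} the rates \eqref{eq:rates_baccelli} on $\mathcal{T}$ are utility-maximising with $g(y) = y^2$, $h(y) = -y^{-1}$ and the set $\mathcal{C}_{\mathcal{T}}$, and by hypothesis there is a periodic $\bar\nu \in \mathcal{C}$ with $\bar\lambda < \bar\nu$; its restriction lies in $\mathcal{C}_{\mathcal{T}}$ (the defining $\bar p$ can be taken periodic, hence well-defined on the torus), so \eqn{eq-sub-critical} holds on $\mathcal{T}$. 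Theorem \ref{thm:bound_third} then gives stability of each torus system and the bound $\varepsilon \sum_{i\in\mathcal{T}} \E (X_i^{\mathcal{T}})^2/\nu_i^2 \le A_{\mathcal{T}}\sum_{i\in\mathcal{T}}\E X_i^{\mathcal{T}}/\nu_i^2 + B_{\mathcal{T}}$. Because all quantities — $\lambda_i$, $\nu_i$, $\E\xi_i^2$, $\E\xi_i^3$ — are periodic, $A_{\mathcal{T}}$ and $B_{\mathcal{T}}$ are exactly $n = |\mathcal{T}|$ times the per-cell averages, so dividing by $n$ and using the periodicity of the stationary law on the torus I get a bound on $\E(X_i^{\mathcal{T}})^2$ for a fixed node $i$ that is \emph{uniform in} $\bar N$; this is the crucial uniform second-moment bound. (I would complete the square, as at the end of the proof of Theorem \ref{thm:bound_third}, to turn the first-moment term into something absorbed by the uniform constant.)

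Then I would pass to the limit. The torus stationary measures, viewed as measures on $\bar X = (X_i)_{i\in\mathbb{Z}^d}$ (extending each torus state periodically, or equivalently looking at finite-dimensional marginals), are tight in the product topology because of the uniform second-moment bound; extract a subsequential weak limit $\tilde\mu$. To see that $\tilde\mu$ is stationary for the infinite-network process I would invoke the continuity property of Section \ref{sec:basic}: couple the torus-stationary initial states converging to $\tilde\mu$ with the infinite process started from $\tilde\mu$; since on any finite time horizon and any finite spatial window the torus process (with $\bar N$ large) agrees with the infinite process — the dynamics are local with interaction range $L$, so boundary/wrap-around effects do not reach a fixed window in bounded time — one gets $\bar X^{\mathcal{T}}(k) \to \bar X(k)$ w.p.1 for each $k$, and stationarity of the limit follows from stationarity of the $\bar X^{\mathcal{T}}$. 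Finite second moments $\E X_i^2 < \infty$ under $\tilde\mu$ follow from the uniform bound via Fatou, and this also guarantees all queues are finite w.p.1, so $\tilde\mu$ is a proper stationary distribution.

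The main obstacle, I expect, is the limit transition making stationarity rigorous: one must be careful that the torus process and the infinite process can genuinely be coupled so as to coincide on finite windows for finite times (the interaction range being finite is what makes this work, but the wrap-around means "coincide" only holds for $\bar N$ large relative to $L k$ and the window), and that the continuity property as stated — which is about convergence of \emph{initial} states forcing convergence of the processes — is applied to the right coupled construction, with the torus processes being realised as restrictions of a common driving randomness. A secondary technical point is the choice of the subsequence of tori compatible with the period lattice and the extraction of a weak limit that is simultaneously a limit of stationary laws; these are routine but need to be stated carefully.
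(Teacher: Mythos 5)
Your proposal follows essentially the same route as the paper's proof: reduce to $\bar\lambda'=\bar\lambda$ by monotonicity, run the truncated systems on tori whose sides are multiples of the periods (the paper uses $\mathcal{R}_n=\{i: i_k=-nC_k,\ldots,nC_k-1\}$), combine Lemma \ref{lemma:fairness} with Theorem \ref{thm:bound_third} and periodicity to get a second-moment bound uniform in $n$, then extract a weak subsequential limit of the stationary laws and use the continuity property plus Fatou to conclude. The technical points you flag (compatibility of the torus with the period lattice, and justifying the coupling behind the continuity property via the finite interaction range) are exactly the ones the paper handles, the latter only by assertion, so your elaboration is consistent with, and slightly more explicit than, the published argument.
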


\begin{proof}[Proof of Theorem \ref{thm:periodic}] 
Due to monotonicity of the process, it suffices to prove the theorem for the periodic arrival rates $\bar \lambda$ and arrival process $\bar\xi$.

Consider the auxiliary, finite version of our system on the sets $\mathcal{R}_n = \{i: i_k = - n C_k, \ldots, n C_k-1\}$ "wrapped around" to form a torus. (The node neighbourhood structure being that of the torus.)
Then the conditions of the Theorem, along with Lemma~\ref{lemma:fairness}, imply stability and therefore existence of the (unique) stationary measure for the process on $\mathcal{R}_n$. Lemma~\ref{lemma:fairness} and Theorem \ref{thm:bound_third}, along with the periodicity, imply that
\begin{equation} \label{eq:cor_tightness}
\sum_{i \in \mathcal{I}} \frac{\E\left((X_i^{(n)})^2\right)}{\nu_i^2} \le A_1 \sum_{i \in \mathcal{I}} \frac{\E(X_i^{(n)})}{\nu_i^2} + A_2
\end{equation}
with some constants $A_1$ and $A_2$, where the upper index $n$ is used to indicate the finite system on torus $\mathcal{R}_n$. Note that $A_1$ and $A_2$ do not depend on $n$. This implies a uniform in $n$ and $i \in \mathcal{I}$ second moment bound
\beql{eq-2moment-torus}
\E\left((X^{(n)}_i)^2\right) \le C < \infty.
\end{equation}

Let us view each process $\bar X^{(n)}(\cdot)$ as a process on the entire infinite lattice $\mathbb Z^d$; say, by letting $X_i(\cdot) \equiv 0$ for $i\not\in \mathcal R_n$. (We note that the node neighbourhood structure remains as that of the torus, and so the process is still as that on the torus.) Correspondingly, we will view the (stationary) distributions of $\bar X^{(n)}$ as distributions on the entire infinite lattice $\mathbb Z^d$; we see from \eqn{eq-2moment-torus} that these distributions are tight (as distributions on $\mathbb Z^d$). Then there exists a subsequence of (stationary) processes $\bar X^{(n)}(\cdot)$, along which $\bar X^{(n)}(0) \Rightarrow \bar X^*$, where $\bar X^*$ is some proper random element (with all components being finite w.p.1), and then  $\bar X^{(n)}(k) \Rightarrow \bar X^*$ for each $k$.

It is easy to observe that the sequence of processes $\bar X^{(n)}(\cdot)$ and the process $\bar X(\cdot)$ 
(which is the true infinite system process) satisfy the continuity property (in Section \ref{sec:basic}). This means the subsequence of (stationary) processes $\bar X^{(n)}(\cdot)$ and the process $\bar X(\cdot)$, with $\bar X(0)$ distributed as $\bar X^*$, 
can be coupled in a way such that $\bar X^{(n)}(k) \to \bar X(k)$ w.p.1, for each $k\ge 0$. This means that $\bar X(k)$ is equal 
in distribution to $\bar X^*$ for each $k$, i.e. we constructed a stationary version of $\bar X(\cdot)$.
Since $X_i^{(n)} \Rightarrow X_i^*$, Fatou's lemma and \eqn{eq-2moment-torus} imply that $\E X_i^2 \le C < \infty$.
\end{proof}


Consider now a special case -- a symmetric infinite system, which means that random variables $\xi_i$ are i.i.d. From Theorem~\ref{thm:periodic} we obtain the following. 
\begin{corollary}
\label{cor-conj-proof}
Consider the symmetric system and assume 
\beql{eq-sym-stabil}
\lambda < \dfrac{1}{\sum_{j \in \mathbb{Z}^d} a_j}.
\end{equation}
Assume additionally that $\E \xi_i^3 < \infty$.
Then the system is stable and its lower invariant measure (i.e., the stationary distribution dominated by any other)
is such that 
$
\E X_i^2 < \infty.
$
\end{corollary}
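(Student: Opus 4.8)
The plan is to read Corollary~\ref{cor-conj-proof} as the period-one special case of Theorem~\ref{thm:periodic}, and then to identify the stationary regime it produces with the lower invariant measure by means of the monotonicity recorded in Section~\ref{sec-mon-cont}. First I would check the hypotheses of Theorem~\ref{thm:periodic} for the symmetric system, taking all periods $C_k=1$ and $\lambda_i\equiv\lambda$: the one thing to verify is the existence of a periodic $\bar\nu\in\mathcal C$ with $\bar\lambda<\bar\nu$. Using the constant (hence periodic) test vector $\bar p=(1,1,\ldots)$, translation invariance of the coefficients $a_j$ gives $\psi_i(\bar p)=1/\sum_{j\in\mathbb Z^d}a_j$ for every $i$, so the constant vector $\bar\nu$ with $\nu_i\equiv 1/\sum_{j\in\mathbb Z^d}a_j$ lies in $\mathcal C$, is periodic, and, by \eqref{eq-sym-stabil}, satisfies $\bar\lambda<\bar\nu$ (this is Lemma~\ref{rem:symmetric_fairness} transcribed to the whole lattice). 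Since the $\xi_i$ are i.i.d.\ with $\E\xi_i^3<\infty$ — in particular i.i.d.\ up to periodicity — Theorem~\ref{thm:periodic} applies and yields stability together with a stationary regime, represented by a random element $\bar X$, for which $\E X_i^2\le C<\infty$ uniformly in $i$.

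It then remains to produce the lower invariant measure and transfer this bound to it. Here I would use the monotonicity property of the single-hop process from Section~\ref{sec-mon-cont}. Let $\bar Y(\cdot)$ denote the process started from the empty state $\bar Y(0)=\bar 0$. Monotone coupling of $\bar Y(\cdot)$ with its one-step time shift (using $\bar 0\le\bar Y(1)$) shows that $\bar Y(k)$ is stochastically nondecreasing in $k$; monotone coupling of $\bar Y(\cdot)$ with an arbitrary stationary process (using $\bar 0\le$ its initial state) shows that $\bar Y(k)$ is stochastically dominated by every stationary distribution, in particular by the law of $\bar X$. A nondecreasing, stochastically bounded sequence of $\mathbb Z_+^{\mathbb Z^d}$-valued random elements converges in distribution to a proper limit $\bar X^{\min}$; the continuity property of Section~\ref{sec:basic} lets me pass to the limit in one step of the dynamics, so $\bar X^{\min}$ is invariant, and by construction it is dominated by every stationary distribution, i.e.\ it is the lower invariant measure. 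Passing to the limit in $\bar Y(k)\preceq\bar X$ gives $\bar X^{\min}\preceq\bar X$, hence a coordinatewise coupling with $X_i^{\min}\le X_i$ a.s., whence $\E (X_i^{\min})^2\le\E X_i^2\le C<\infty$.

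The hard part will be the second paragraph — the construction and characterisation of $\bar X^{\min}$: one must be sure that the monotone limit is genuinely proper (every queue a.s.\ finite) and genuinely invariant, and that it is dominated by every stationary distribution. All three points follow from combining monotonicity with the already-established existence of the proper stationary element $\bar X$ and with the continuity property, and this is precisely the construction carried out in \cite{Baccelli2018}, which I would cite rather than repeat; the only genuinely new ingredient is that finiteness of the second moments of the lower invariant measure then comes for free from the domination $\bar X^{\min}\preceq\bar X$ together with the uniform torus bound delivered by Theorem~\ref{thm:periodic}.
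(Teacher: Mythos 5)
Your proposal is correct and follows essentially the same route as the paper: verify the hypotheses of Theorem~\ref{thm:periodic} in the period-one case via the constant test vector (i.e.\ Lemma~\ref{rem:symmetric_fairness}), obtain a stationary distribution with $\E X_i^2\le C<\infty$, and then transfer the bound to the lower invariant measure by domination. The paper states this in two lines, taking the existence of the lower invariant measure for granted (via monotonicity and the construction of \cite{Baccelli2018}), whereas you spell out that construction explicitly; this is additional detail rather than a different argument.
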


Indeed, by Lemmas \ref{lemma:fairness} and  \ref{rem:symmetric_fairness}, and Theorem~\ref{thm:periodic}, condition
\eqn{eq-sym-stabil} ensures stabiltiy. By Theorem~\ref{thm:periodic}, $\E X_i^2<\infty$ holds for {\em some} stationary distribution,
and therefore it holds for the lower invariant measure as well. 

\begin{remark} \label{rem:periodic}

The simplest case where $\lambda_i$ are not all the same is when we consider a network on the line such that the rates are periodic with a period $2$. Denote the different values by $\lambda_1$ and $\lambda_2$. Theorem \ref{thm:periodic} implies stability as long as there exist $p_1$ and $p_2$ such that
$$
\lambda_1 < \frac{p_1}{p_1+2p_2}
$$
and
$$
\lambda_2 < \frac{p_2}{p_2+2p_1}.
$$
One can see that by taking, for instance, $p_1 = 1$ and $p_2 = \delta > 0$, the values of $\lambda_1$ for which stability holds may be taken arbitrarily close to $1$ (of course at the expense of very low values of $\lambda_2$).
\end{remark}

\begin{remark}
The proof of Theorem \ref{thm:periodic} uses only the utility-maximising properties of the rates and the process continuity properties. It is clear that similar arguments may be used to demonstrate stability of infinite networks in many other cases (see the next section for some examples). 
\end{remark}

\subsection{Other networks}

So far in Section \ref{sec:infinite_single} we have only looked at a particular infinite network, motivated by \cite{Baccelli2018} and \cite{SnSt2018}. In this subsection we demonstrate that the methods developed above are rather general and may be applied to a much wider class of networks. We present some examples below but would like to stress that we think this list is not exhaustive.

For simplicity, we restrict ourselves here to the case when queues are located on a one-dimensional lattice (i.e., 
at the integers). 
One can easily see that the results described below also hold for the more general location and neighbourhood structures described in the rest of Section \ref{sec:infinite_single}.

Consider rates given by
\begin{equation} \label{eq:shannon_rates}
\psi^*_i(\ox) = \log\left(1+\frac{x_i}{x_{i-1}+x_i+x_{i+1}}\right) = \log(1+\psi_i(\ox)),
\end{equation}
which provide a better approximation to the 
wireless channel capacity than the rates considered in Section \ref{sec:infinite_single} so far. 
(Ratio $x_i/(x_{i-1}+x_i+x_{i+1})$ represents Singal-to-Interference-Ratio (SIR).)
Consider 
the auxiliary, finite version of our system on the finite set $\{-n, \ldots, n-1\}$ (with the $2n$ nodes) "wrapped around" to form a torus. (The node neighborhood structure being that of the torus.)
Denote
\begin{equation} \label{eq:shannon_set}
\C^* = \left\{\om: \mu_i \le \log\left(1+\frac{p_i}{\sum_{j \in \N_i} p_j}\right) \quad \text{for some} \quad \bar{p} \in \mathbb{R}_+^{2n} \right\}.
\end{equation}
The following result is immediate.

\begin{lemma} \label{lem:shannon}
It holds that
$$
\bar{\psi^*} \in \argmax_{\om \in \C^*} \sum_i g(x_i) \tilde{h}(\mu_i),
$$
where $g(x) = x^2$ and
$$
\tilde{h}(y) = - \frac{1}{e^y-1}.
$$
\end{lemma}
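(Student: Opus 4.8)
The plan is to prove Lemma~\ref{lem:shannon} by reducing it to the already-established optimality result for the rates $\psi_i$ in Lemma~\ref{lemma:fairness}. The key observation is that $\tilde h$ is simply $h$ composed with the map $y \mapsto e^y-1$: indeed $\tilde h(y) = -\frac{1}{e^y-1} = h(e^y-1)$ where $h(z) = -z^{-1}$, so that $g(x_i)\tilde h(\mu_i) = g(x_i) h(e^{\mu_i}-1)$. Thus if I substitute $\mu_i = \psi^*_i(\ox) = \log(1+\psi_i(\ox))$, then $e^{\mu_i}-1 = \psi_i(\ox)$, and the objective $\sum_i g(x_i)\tilde h(\mu_i)$ evaluated at $\bar\psi^*$ equals $\sum_i g(x_i) h(\psi_i(\ox))$ — exactly the quantity Lemma~\ref{lemma:fairness} says is maximised by $\bar\psi(\ox)$ over $\bar\mu\in\C$ with $g(x)=x^2$, $h(y)=-y^{-1}$.

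First I would record the elementary bijection $\Phi: \C \to \C^*$ given by $(\Phi(\bar\mu))_i = \log(1+\mu_i)$, with inverse $(\Phi^{-1}(\bar\nu))_i = e^{\nu_i}-1$. Comparing the defining sets: $\bar\mu\in\C$ iff $\mu_i \le \psi_i(\bar p) = p_i/\sum_{j\in\N_i}p_j$ for some $\bar p$, while $\bar\nu\in\C^*$ iff $\nu_i \le \log(1+p_i/\sum_{j\in\N_i}p_j)$ for some $\bar p$; since $y\mapsto\log(1+y)$ is strictly increasing, $\bar\nu\in\C^*$ iff $\Phi^{-1}(\bar\nu)\in\C$, so $\Phi$ is indeed a bijection between the two feasible sets. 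Then for any $\bar\nu\in\C^*$, writing $\bar\mu = \Phi^{-1}(\bar\nu)\in\C$, we have $\sum_i g(x_i)\tilde h(\nu_i) = \sum_i g(x_i) h(e^{\nu_i}-1) = \sum_i g(x_i) h(\mu_i) \le \sum_i g(x_i) h(\psi_i(\ox))$ by Lemma~\ref{lemma:fairness}; and the right-hand side equals $\sum_i g(x_i)\tilde h(\psi^*_i(\ox))$ by the substitution above, with $\bar\psi^* = \Phi(\bar\psi(\ox)) \in \C^*$ attainable. This is exactly the claim.

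I would also note the minor point that $\tilde h$ satisfies the qualitative shape one expects (strictly increasing on $(0,\infty)$, with $\tilde h(y)\to-\infty$ as $y\downarrow 0$), which makes the statement consistent with the general framework of Section~\ref{sec:finite}; this is not needed for the reduction itself but is worth a sentence. The proof is essentially a change of variables and requires no real analysis beyond monotonicity of $\log(1+y)$, so there is no serious obstacle — the only thing to be careful about is the direction of the inequality and the convention $0/0=0$ when some $x_i$ or $p_i$ vanish, which is handled exactly as in the proof of Lemma~\ref{lemma:fairness}. For this reason I would keep the proof to a few lines, simply stating that the claim follows from Lemma~\ref{lemma:fairness} and the identity $\tilde h(y) = h(e^y-1)$ together with the change of variables $\mu_i \leftrightarrow \log(1+\mu_i)$ identifying $\C^*$ with $\C$.
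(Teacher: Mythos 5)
Your proposal is correct and takes essentially the same route as the paper: both reduce the claim to Lemma~\ref{lemma:fairness} via the identity $\tilde{h}(y) = h(e^{y}-1)$ with $h(z)=-z^{-1}$, using the monotone change of variables $\mu_i \mapsto \log(1+\mu_i)$ that identifies $\C^*$ with $\C$. Your explicit bijection $\Phi$ just makes slightly more careful what the paper treats as an immediate equivalence of the two inequalities.
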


Indeed, the statement of the lemma reads
$$
\sum_i g(x_i) \tilde{h}(\mu_i) \le \sum_i g(x_i) \tilde{h}(\psi^*_i)
$$
for all $\om \in \C^*$. This is equivalent to
$$
\sum_i g(x_i) \tilde{h}\left(\log\left(1+\frac{p_i}{p_{i-1}+p_i+p_{i+1}}\right) \right) \le \sum_i g(x_i) \tilde{h}(\tilde{\psi^*_i})
$$
for all $\bar{p} \in \mathbb{R}_+^{2n}$. Taking into account definitions of $g$, $\tilde{h}$ and $\tilde{\psi_i}$, the above is equivalent to
$$
- \sum_i x_i^2 \left(\frac{p_i}{p_{i-1}+p_i+p_{i+1}}\right)^{-1} \le - \sum_i x_i^2 \psi_i^{-1},
$$
which follows immediately from Lemma \ref{lemma:fairness}.

As functions $g$ and $\tilde{h}$ in the statement of Lemma \ref{lem:shannon} satisfy all the conditions imposed in Section \ref{sec:finite}, we obtain {\em stability for any finite network,} as long as $\ol$ is within $\C^*$. Moreover, one can also readily see that fluid limits for such a system are well defined and stable. From here we obtain that the result of Theorem \ref{thm:bound_third} holds, with the same {\em finite-system second-moment bound}, except with 
$1/\nu_i^2$ replaced by $\tilde{h}'_i(\nu_i)$. Then the strategy of Section \ref{sec:infinite_stability} can thus be followed to obtain {\em stability of the infinite version of the network with rates $\bar{\psi}^*$.}

More generally, assume that
$$
\sum g(x_i) h(\psi_i(\ox)) \ge \sum g(x_i) h(\mu_i)
$$
for all $\om \in \C$. We can rewrite this as
$$
\sum g(x_i) \tilde{h}(f(\psi_i(\ox))) \ge \sum g(x_i) \tilde{h}(f(\mu_i))
$$
for all $\om \in \C$, where $\tilde{h} = h \circ f^{-1}$ and $f^{-1}$ is the inverse of $f$ (which we assume to be increasing). The above may be rewritten again as
$$
\sum g(x_i) \tilde{h}(f(\psi_i(\ox))) \ge \sum g(x_i) \tilde{h}(\nu_i)
$$
for all $\on \in \C^*$, where
$$
\C^* = f(\C) = \{\on: (f^{-1}(\nu_1), \ldots, f^{-1}(\nu_N)) \in \C\}.
$$
Therefore, as long as $\tilde{h}$ is concave (which in general does not necessarily hold), then all conditions of Section \ref{sec:finite} are satisfied, and we obtain stability of a finite network with rates $f(\psi_i)$, provided $\ol$ is within $\C^*$. If, further, fluid limits of finite networks are well-defined and stable, additional steady-state moment bounds can be obtained, which in turn 
leads to stability of infinite networks.

It is important to note that, {\em if one is only interested in the stability of finite networks} with the rates considered so far in Section \ref{sec:infinite_single} (including the rates considered in this subsection), this may be established {\em without} the use of utility maximisation. In fact, for finite networks, one can demonstrate stability for a wider still class of rates, including rates of the form
$$
\log\left(1+\frac{x_i}{x_{i-1}+x_i+x_{i+1}+B}\right).
$$
(The proof is a slight generalization of that of Lemma 9 in \cite{SnSt2018}.)
Here $B>0$ represents the ``background noise'' and $x_i/(x_{i-1}+x_i+x_{i+1}+B)$ is Singal-to-Interference-plus-Noise-Ratio (SINR).
These rates provide an even more realistic approximation of the 
wireless channel capacity. We are not aware of any utility-maximisation properties satisfied by these rates, yet (a slight generalization of)  \cite[Lemma 9]{SnSt2018} provides stability of a finite network with these transmission rates, as long as the vector of arrival intensities belongs to the set $\C^*$ defined in \eqref{eq:shannon_set}. This is thanks to the fact that fluid limits for the system with these rates are ``same'' 
(satisfy same properties)
as for the system with rates \eqref{eq:shannon_rates}.

We emphasize again that it is the utility-maximisation property, enjoyed by rates \eqref{eq:shannon_rates}, that allows us to obtain 
not only the stability, but also steady-state 
moment bounds, for finite networks. The finite network moment bounds, in turn, allow us to apply the methods developed in this paper to construct a stationary distribution for an infinite network, and moreover obtain moment bounds for this stationary distribution.

\section{Stability analysis of an infinite multi-hop network} 
\label{sec:infinite_multi}

In this section we demonstrate how techniques similar to the ones we used in the single-hop case may be used to demonstrate the existence of invariant measures for infinite multi-hop networks, where, upon a service completion at a given queue, a job may leave the system or enter the queue of a neighbouring node. 

Multi-hop networks have an additional layer of difficulty as the movement of jobs between different queues complicates the dependence structure of the queue states further. Multi-hop networks are notoriously difficult to analyse, and we consider significantly stronger assumptions on the structure of the network, with strictly i.i.d. arrival processes and with symmetric routing (see \cite{SnSt2018}). 

Specifically, the model is as follows.
Just as in Section~\ref{sec:infinite_single}, the nodes (queues) are located on the $d$-dimensional lattice. 
The exogenous arrival processes are strictly i.i.d.; namely, the random variables $\xi_i$ representing the numbers of new arrivals are i.i.d. with $\E(\xi_i) = \lambda q$ with a fixed $q \in (0,1)$ and with $\E(\xi_i^2) < \infty$. 
The service is governed by either algorithm (D1) or (D2), specified in Section~\ref{sec:infinite_single}.
Upon a service completion at any node, a job leaves the system with probability $q$ or joins the queue of a neighbour of node $i$ (i.e., connected by one lattice edge),
chosen independently at random (i.e., each neighbour is chosen with probability $(1-q)/2^d)$). All the routing decisions are taken independently of everything else. The service rates are given by
$$
\psi_i(\ox) = \frac{x_i}{\sum_{j \in \mathcal{N}_i} x_j},
$$
where $\mathcal{N}_i$ is the neighbourhood of node $i$ on the $\mathbb{Z}^d$ lattice, which by convention includes node $i$ itself.
(In other words, the service rates are a special case of those considered in Section~\ref{sec:infinite_single}, with the neighbourhood
$\mathcal N_i$ of node $i$ including specifically the neighbours in terms of the lattice, and with $a_{j-i}=1$ for all $j\in \mathcal N_i$.)

Consider the auxiliary, finite version of our system on the set $\mathcal{T}_n = \{i: i_k=-n,\ldots,n-1\}$, "wrapped around" to form a torus. (The node neighbourhood structure is that of the torus.) Then $\mathcal{T}_n$ is a finite $2^d$-regular graph, and \cite[Theorem 5]{SnSt2018} implies that if $\lambda < 1/(2^d+1)$, then the system is stable. Therefore, there exists a stationary distribution of the number of messages in each queue. Due to symmetry, the (stationary) numbers of messages in any two queues are identically distributed, and we will consider queue $0$ for simplicity. Denote the stationary number of messages in queue $0$ in the system on torus $\mathcal{T}_n$ by $X^{(n)}$.

We want to emphasise that the described multi-hop process 
(for both the infinite system and a finite torus)
 is {\em not monotone} (unlike in the single-hop model of Section~\ref{sec:infinite_single}), and this is in fact one of the key challenges of the multi-hop system analysis. 
Versions of this process, however, do have continuity properties, which we will exploit, just as in the single-hop case. 

\begin{theorem} \label{thm:multi_bound}
Consider the multi-hop model on torus $\mathcal{T}_n$, described above, and denote by $\xi$ a random variable with the distribution of $\xi_i(k)$ for any $i$ and $k$. Assume that $\E(\xi^2) < \infty$ and $\E(\xi) = \lambda < \frac{1}{2^d+1}$. Then
\begin{equation} \label{eq:multi_bound}
\E(X^{(n)}) \le \frac{\E(\xi^2) + 2^d(1-q) \lambda + \lambda - 2 \lambda^2 q^2}{2q\left(\frac{1}{2^d+1} - \lambda\right)}.
\end{equation}
\end{theorem}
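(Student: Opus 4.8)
The plan is to mimic the stationary-drift computation used in the proof of Theorem~\ref{thm:bound_third}, but now for the \emph{first} moment of the queue length in the multi-hop torus system, accounting for the extra internal arrivals produced by routing. Working on the fixed finite torus $\mathcal T_n$, I would consider the stationary version of the process and write the one-step dynamics of queue~$0$ as $X_0(k+1) = X_0(k) + \xi_0(k) + R_0(k) - \eta_0(k)$, where $R_0(k)$ is the number of jobs routed \emph{into} queue $0$ from its $2^d$ lattice-neighbours in slot $k$: each neighbour $j$ of $0$ contributes its service completion $\eta_j(k)$ with probability $(1-q)/2^d$ (routed to $0$), independently. Taking expectations in stationarity and using $\E(X_0(k+1))=\E(X_0(k))$ gives a balance equation $\E(\xi_0) + \E(R_0) = \E(\eta_0)$; by symmetry all queues have the same stationary service-completion probability $\E(\eta_j)=\E(\psi_j)=:\bar\psi$, so $\E(R_0)=2^d\cdot\frac{1-q}{2^d}\bar\psi=(1-q)\bar\psi$ and hence $\lambda + (1-q)\bar\psi = \bar\psi$, i.e.\ $\bar\psi = \lambda/q$. (This is the multi-hop analogue of \eqref{eq:third_aux_1}.)

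Next I would equate second moments in stationarity, $\E(X_0^2(k+1))=\E(X_0^2(k))$. Expanding $(X_0 + \xi_0 + R_0 - \eta_0)^2$ and taking expectations, the left side cancels, leaving a relation of the form
\begin{align*}
0 = {} & \E(\xi_0^2) + \E(R_0^2) + \E(\eta_0) + 2\E(\xi_0 R_0) - 2\E(\xi_0\eta_0) - 2\E(R_0\eta_0) \\
& {} + 2\E(X_0)\bigl(\E(\xi_0)+\E(R_0)-\E(\eta_0)\bigr) + 2\E\bigl(X_0(\xi_0 + R_0 - \eta_0)\bigr) - 2\E(X_0)\bigl(\E(\xi_0)+\E(R_0)-\E(\eta_0)\bigr),
\end{align*}
where I have been slightly schematic; the coefficient of the linear-in-$\E(X_0)$ term vanishes by the first-moment balance, and $\xi_0$ is independent of everything so $\E(X_0\xi_0)=\lambda\E(X_0)$ and $\E(\xi_0 R_0)=\lambda\E(R_0)$, $\E(\xi_0\eta_0)=\lambda\bar\psi$. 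The genuinely new terms are $\E(R_0^2)$, $\E(X_0 R_0)$ and $\E(R_0\eta_0)$; these I would handle by conditioning on $\bar X(k)$ and on the service outcomes $(\eta_j)$, using that the routing indicators are i.i.d.\ Bernoulli$((1-q)/2^d)$ given the $\eta_j$'s, that $\eta_j^2=\eta_j$, and the key identity \eqref{eq:expectations}-type fact $\E(X_0^\ell\eta_j\mid \bar X)=X_0^\ell\psi_j$. The crucial cross term $\E(X_0 R_0)$ becomes $\frac{1-q}{2^d}\sum_{j\sim 0}\E(X_0\psi_j)$, and here I would invoke the utility-maximisation inequality \eqref{eq:fairness_inequality} with $g(y)=y^2$, $h(y)=-1/y$ — exactly as in the last display of the proof of Theorem~\ref{thm:bound_third} — to bound $\sum_j \E(X_j\psi_j)$ from above in terms of $\sum_j \E(X_j^2)$, which after using symmetry ($\E(X_j^2)=\E(X_0^2)$, $\E(X_j\psi_j)=\E(X_0\psi_0)$ by shift-invariance on the torus) collapses the second-moment terms and leaves only $\E(X_0)$ and $\E(X_0^2)$. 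The point is that, just as in Theorem~\ref{thm:bound_third}, the $\E(X_0^2)$ contributions should combine to produce a negative coefficient proportional to $\bigl(\tfrac{1}{2^d+1}-\lambda\bigr)$ (reflecting that $\lambda/q$ is the stationary service rate and the "subcriticality gap" on a $2^d$-regular graph is $\tfrac{1}{2^d+1}-\lambda$), so that they can be dropped to yield an upper bound on $\E(X_0)$ alone.

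Finally I would assemble the pieces: after substituting $\bar\psi=\lambda/q$, the second-moment balance reads (schematically) $\ 0 = \E(\xi^2) + 2^d(1-q)\lambda\cdot(\text{const}) + \lambda - 2\lambda^2 q^2 + 2q\bigl(\tfrac{1}{2^d+1}-\lambda\bigr)\cdot(-\E(X_0)) + (\text{nonpositive terms in }\E(X_0^2))$, and solving for $\E(X_0)$ gives precisely \eqref{eq:multi_bound}. One technical wrinkle, as in the proof of Theorem~\ref{thm:bound_third}, is that a priori we only know $\E(X^{(n)})<\infty$ (stability from \cite[Theorem 5]{SnSt2018}) but not $\E((X^{(n)})^2)<\infty$; I would deal with this exactly as there, by truncating the arrivals at level $M$ (replacing $\xi_i$ by $\min\{\xi_i,M\}$, which keeps $\lambda$ below threshold and makes fourth moments of arrivals finite hence second moments of queues finite by \cite{Meyn1995}), deriving the bound for each $M$, and then passing $M\to\infty$ via the continuity property and Fatou's lemma — noting the bound on $\E(X^{(n),M})$ is uniform in $M$. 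The main obstacle I anticipate is the careful bookkeeping of the routing cross-terms $\E(R_0^2)$, $\E(R_0\eta_0)$, $\E(X_0R_0)$ — in particular getting the combinatorial constants right (a neighbour $j$ of $0$ has $0$ as one of \emph{its} $2^d$ neighbours, so symmetry on the torus must be used with care) — and verifying that, after applying \eqref{eq:fairness_inequality} and using symmetry, the residual coefficient of $\E(X_0^2)$ is genuinely of the right sign, i.e.\ that the algebra really does reproduce the factor $\tfrac{1}{2^d+1}-\lambda$ in the denominator.
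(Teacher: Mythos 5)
Your overall skeleton matches the paper's proof: work on the fixed torus in stationarity, use the first-moment balance to identify the mean service-completion probability, equate second moments, bound the routing terms, and remove the a priori assumption $\E((X^{(n)})^2)<\infty$ by truncating the arrivals at level $M$ and passing to the limit via continuity, \cite{Meyn1995} and Fatou. That last part, and your treatment of $\E(R_0^2)$ and $\E(R_0\eta_0)$, are exactly as in the paper.

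The genuine gap is in your handling of the crucial cross term. You propose to invoke the $2$-fairness inequality \eqref{eq:fairness_inequality} with $g(y)=y^2$, $h(y)=-1/y$, as in Theorem~\ref{thm:bound_third}. But that inequality yields $\sum_i X_i^2\,\psi_i \ge \frac{1}{2^d+1}\sum_i X_i^2$, i.e.\ it controls the \emph{second-moment} quantity $\E(X_i^2\psi_i)$ — which is what appears in the third-moment balance of Theorem~\ref{thm:bound_third}, but not here. In the second-moment balance for the multi-hop system the term you must control is $\E\bigl(X_0(R_0-\eta_0)\bigr)$, which (after the symmetry swap $\E(X_i\eta_j)=\E(X_j\eta_i)$, which you should state explicitly) reduces to a combination of $\E(X_0)$ and the \emph{first-moment} quantity $\E(X_0\psi_0)$. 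The lower bound $\E(X_0\psi_0)\ge\frac{1}{2^d+1}\E(X_0)$ does not follow from $2$-fairness; the paper obtains it from the pointwise Jensen/convexity inequality $\sum_i X_i\psi_i(\bar X)\ge\frac{1}{2^d+1}\sum_i X_i$ (convexity of $1/x$ with weights $X_i/\sum X_i$, using that the torus is $2^d$-regular), followed by symmetry. Without this step your calculation leaves residual $\E(X_0^2)$ terms whose sign you cannot certify, and the algebra does not close to an inequality in $\E(X_0)$ alone. A secondary issue: the paper's model has $\E(\xi_i)=\lambda q$, so the first-moment balance gives $\E(\eta_i)=\lambda$ (not $\lambda/q$); with your convention $\E(\xi_0)=\lambda$ the constants (e.g.\ the term $2^d(1-q)\lambda$ coming from $\E(R_0^2)\le 2^d(1-q)\E(\eta)$, and the term $-2\lambda^2q^2$) would not reproduce \eqref{eq:multi_bound} as stated.
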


\begin{proof}[Proof of Theorem \ref{thm:multi_bound}]

Consider the stationary version $\bar X^{(n)}(\cdot)$ of the Markov chain.
For ease of notation, in this proof we will drop the superscript $(n)$, and write 
$\bar X(\cdot)$ instead of $\bar X^{(n)}(\cdot)$. Similarly, we will write $X$ instead of $X^{(n)}$ for a random vector distributed as $X^{(n)}_i(k)$
(for any $i$ and $k$, by stationarity and symmetry). We can describe the evolution of $X_i(k)$ as
\begin{equation} \label{eq:multi_evolution}
X_i(k+1) = X_i(k) + \xi_i(k) + \sum_{j \in \N_i, j \neq i} I_{ji}(k) \eta_j(k) - \eta_i(k),
\end{equation}
where random variables $I_{ji}(k)$ are indicator functions of events that a message potentially leaving node $j$ in time slot $k$ will choose node $i$ as its destination. For ease of notation, as we only consider a single time slot in what follows, we are going to simply write $\xi_i$, $\eta_l$ and $I_{ji}$.

As in the previous sections, note that random variables $\eta_l$ can only take values $0$ and $1$ and
$$
\E(\eta_l|\bar{X}) = \P(\eta_l=1|\bar{X}) = \psi_l(\bar{X}) \quad \text{a.s.}
$$
Note also that
$$
\E(I_{ji}) = (1-q) \frac{1}{2^d}
$$
for all $j$ and $i$. Due to stationarity of the process $\bar{X}(\cdot)$, $X_i(k)$ and $X_i(k+1)$ have the same distributions.

Moreover, this distribution has a finite mean: 
\beql{eq-finite-mean}
\E(X_i(k+1)) = \E(X_i(k)) = \E X < \infty.
\eeql
Indeed, $\E \xi^2 < \infty$. We also know from \cite{SnSt2018} that the system {\em fluid limits are stable}. (For our system, the definition of fluid limits -- called fluid sample paths in \cite{SnSt2018} -- as well as that of fluid limit stability for a finite system, are given in Section 4 of \cite{SnSt2018}. The fluid limit stability proof is in Section 6 of \cite{SnSt2018}.) By results of \cite{Meyn1995}, the combination of the finite second moment of the arrival process $\E \xi^2 < \infty$,  
finite second (in fact, any positive) moment of the number of departures from any node at any time,
and fluid limit stability, implies that $\E X < \infty$.

From \eqref{eq:multi_evolution} and \eqn{eq-finite-mean},
\begin{equation} \label{eq:multi_Lyapunov1}
0 = \lambda q + \sum_{j \in \N_i, j\neq i} (1-q) \frac{1}{2^d} \E(\eta_{j}) - \E(\eta_i),
\end{equation}
where we used the fact that $I_{ji}$ and $\eta_j$ are independent. Note now that
$$
\E(\eta_l) = \E(\E(\eta|\bar{X})) = \E(\psi_l(\bar{X})) = \E\left(\frac{X_l}{\sum_{j \in \N_l} X_j}\right)
$$
for any $l$, and, due to the symmetry of the model, it does not depend on $l$. Hence, continuing \eqref{eq:multi_Lyapunov1},
$$
0 = \lambda q + 2^d (1-q) \frac{1}{2^d} \E(\eta_i) - \E(\eta_i),
$$
implying
\begin{equation} \label{eq:Eeta}
\E(\eta_i) = \lambda
\end{equation}
for any $i$. Denote $A_i = \sum_{j \in \N_i, j \neq i} I_{ji} \eta_j$.

Assume first that $\E X^2 < \infty$. (We will show later in the proof how to get rid of this additional assumption.)
Stationarity of the process $\bar{X}(\cdot)$ implies that $\E(X^2_i(k+1)) = \E(X^2_i(k))$ and hence, from \eqref{eq:multi_evolution},
\begin{align}
0 & = \E(\xi_i^2) + \E(A_i^2) + \E(\eta_i^2) - 2\E(A_i \eta_i) + 2\E(\xi_i (A_i-\eta_i)) +2\E(X_i \xi_i) + 2\E(X_i (A_i - \eta_i)) \notag
\\ & \le \E(\xi_i^2) + \E(A_i^2) + \lambda + 2 \lambda q \left(\E(A_i) - \E(\eta_i)\right) + 2 \lambda q \E(X_i) + 2\E(X_i (A_i - \eta_i)) \notag
\\ & = \E(\xi_i^2) + \E(A_i^2) + \lambda - 2 \lambda^2 q^2  + 2 \lambda q \E(X_i) + 2\E(X_i (A_i - \eta_i)). \label{eq:multi_Lyapunov2}
\end{align}
In the derivations above we used the independence of $\xi_i$ from all other random variables, the fact that $\E(\eta_i^2) = \E(\eta_i) = \P(\eta_i=1)$, equation \eqref{eq:Eeta} and finally, in the last equality, a simple calculation of $\E(A_i)$ already performed earlier in this proof (see \eqref{eq:multi_Lyapunov1}).

We consider some of the terms above separately. First,
\begin{equation} \label{eq:multi_proof_aux_2}
\E(A_i^2) = \E\left(\left(\sum_{j \in \N_i, j \neq i} I_{ji} \eta_j\right)^2 \right) \le 2^d \E \left(\sum I_{ji}^2 \eta_j^2  \right) = 2^d \E \left(\sum I_{ji} \eta_j  \right) = 2^d (1-q) \lambda
\end{equation}
where we used convexity of the function $x^2$, independence of $I$'s and $\eta$'s, as well as the facts that all the random variables concerned only take values $0$ and $1$ and  therefore are equal to their squares.

Let us note now that
$$
\E(X_i \eta_j) = \E(\E(X_i \eta_j|\bar{X})) = \E(X_i \E(\eta_j|\bar{X})) = \E(X_i \psi_j) = \E\left(X_i \frac{X_j}{\sum_{l \in \N_j} X_l}\right)
$$
for any $i$ and $j$. It is clear that, due to the symmetry of the model, for any $j \in \N_i$, the pairs $(X_i, \eta_j)$ and $(X_j, \eta_i)$ have identical distributions which implies, in particular, that
$$
\E(X_i \eta_j) = \E(X_j \eta_i).
$$
Consider now
\begin{align}
& \E(X_i(A_i - \eta_i)) = \sum_{j \in \N_i, j \neq i} \E(X_i I_{ji} \eta_j) - \E(X_i \eta_i) = (1-q)\frac{1}{2^d} \sum_{j \in \N_i, j \neq i} \E(X_i \eta_j) - \E(X_i \eta_i) \notag
\\ & = (1-q)\frac{1}{2^d} \sum_{j \in \N_i, j \neq i} \E(X_j \eta_i) - \E(X_i \eta_i)   = (1-q)\frac{1}{2^d} \sum_{j \in \N_i, j \neq i} \E(X_j \psi_i) - \E(X_i \psi_i) \notag
\\ & = \E\left(\psi_i \left((1-q)\frac{1}{2^d} \sum_{j \in \N_i, j \neq i} X_j - X_i \right) \right) \notag
\\ & = \E\left(\frac{X_i}{\sum_{j \in \N_i} X_j}\left((1-q)\frac{1}{2^d} \sum_{j \in \N_i, j \neq i} X_j + (1-q)\frac{1}{2^d} X_i - \left(1+(1-q)\frac{1}{2^d}\right) X_i\right)\right) \notag
\\ & = (1-q) \frac{1}{2^d} \E(X_i) - \left(\frac{2^d+1}{2^d} -q\frac{1}{2^d}\right) \E(X_i \psi_i). \label{eq:multi_proof_aux_3}
\end{align}
Convexity of the function $1/x$ implies that
$$
\frac{1}{\sum_i X_i}\sum_i X_i \psi_i = \sum_i \frac{X_i}{\sum_i X_i}\frac{1}{(\sum_{j \in \N_i} X_j)/X_i} \ge \frac{\sum_i X_i}{\sum_i \sum_{j \in \N_i} X_j} = \frac{1}{2^d+1},
$$
for any $\bar{X}$. Above the summation is taken over all nodes $i$ in our graph. We can rewrite the above as
$$
\sum_i X_i \psi_i \ge \frac{1}{2^d+1} \sum_i X_i
$$
and take the expectations on both sides of the equation. Noting that, due to the symmetry of the model, $\E(X_i \psi_i)$ does not depend on $i$, we conclude that
$$
\E(X_i \psi_i) \ge \frac{1}{2^d+1} \E(X_i)
$$
and, plugging this into \eqref{eq:multi_proof_aux_3}, we obtain
\begin{equation} \label{eq:multi_proof_aux_4}
\E(X_i(A_i - \eta_i)) \le -q \frac{1}{2^d+1} \E(X_i).
\end{equation}
We can now plug \eqref{eq:multi_proof_aux_2} and \eqref{eq:multi_proof_aux_4} into \eqref{eq:multi_Lyapunov2} to obtain
$$
0 \le \E(\xi_i^2) + 2^d (1-q) \lambda + \lambda - 2 \lambda^2 q^2 + 2q\E(X_i) \left(\lambda - \frac{1}{2^d+1}\right),
$$
which implies the statement of Theorem \ref{thm:multi_bound}.

We now show how to remove the additional assumption $\E X^2 < \infty$. Consider the system with truncated arrival quantities
$\xi^{(M)} = \max\{\xi,M\}$. The corresponding process is stable for any $M>0$, and we denote by $\bar X^{(M)}(\cdot)$ its stationary version,
and by $X^{(M)}$ a generic $X_i^{(M)}(k)$. For each $M$, $\E [\xi^{(M)}]^3 < \infty$ (in fact, of course, $\E [\xi^{(M)}]^m < \infty$ for any $m\ge 0$). As already described above in this proof, it is proved in \cite{SnSt2018} that the system fluid limits are stable. 
Using again the results of \cite{Meyn1995}, the combination of the finite third moment of the arrival process $\E [\xi^{(M)}]^3 < \infty$,  
finite third (in fact, any positive) moment of the number of departures from any node at any time,
and fluid limit stability, implies that $\E [X^{(M)}]^2 < \infty$ for any $M$. (In fact, $\E [X^{(M)}]^m < \infty$ for any $M$ and any $m\ge 0$). 

Therefore, for $\E X^{(M)}$, we have the upper bound \eqn{eq:multi_bound} with $\lambda$ replaced by $\E \xi^{(M)}$,
and $\E \xi^2$ replaced by $\E [\xi^{(M)}]^2$. Choose a sequence $M\uparrow\infty$. Then, $\E X^{(M)}$ is uniformly upper bounded along this sequence, and therefore the sequence of distributions of $X^{(M)}$ is tight.
We further observe that the sequence of processes $\bar X^{(M)}(\cdot)$ and the process $\bar X(\cdot)$ satisfy the continuity property
(defined in Section \ref{sec:basic}). Proceeding analogously to the argument we used at the end of the proof of Theorem~\ref{thm:periodic},
we obtain that $X^{(M)} \Rightarrow X$. 
By Fatou's lemma, $\E X \le \liminf_{M\to\infty} \E X^{(M)}$, and the $\liminf$ is upper bounded by the RHS of \eqn{eq:multi_bound}.
\end{proof}

The fact that the bound in \eqref{eq:multi_bound} does not depend on $n$ allows us to prove a result on an infinite-lattice multi-hop model.

\begin{theorem} 
\label{multi_infinity}
Consider the multi-hop model of this section defined for the entire lattice $\mathbb{Z}^d$ and assume that $\lambda < 1/(2^d+1)$. Then the process is stable. Moreover, there is a translation-invariant stationary distribution, for which
\begin{equation} 
\label{eq:multi_bound-inf}
\E X \le \frac{\E(\xi^2) + 2^d(1-q) \lambda + \lambda - 2 \lambda^2 q^2}{2q\left(\frac{1}{2^d+1} - \lambda\right)},
\end{equation}
where $X$ has the distribution of $X_i(k)$ (for any $i$ and $k$) in steady-state.
\end{theorem}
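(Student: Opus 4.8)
The plan is to carry out, for the multi-hop system, the same finite-to-infinite limit transition used in the proof of Theorem~\ref{thm:periodic}, with the uniform (in $n$) first-moment bound of Theorem~\ref{thm:multi_bound} playing the role that the uniform second-moment bound plays in the single-hop case. Since the multi-hop process is not monotone, I cannot build a lower invariant measure or argue convergence from the empty state; instead I will extract a subsequential limit of the torus-system stationary distributions and identify it as a stationary distribution of the infinite-lattice process by means of the continuity property of Section~\ref{sec:basic}.

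First I would note that, for every $n$, the torus system on $\mathcal{T}_n$ is stable by \cite[Theorem~5]{SnSt2018} (because $\lambda<1/(2^d+1)$), hence admits a stationary distribution, and by Theorem~\ref{thm:multi_bound} its stationary mean $\E X^{(n)}$ is bounded above by the right-hand side of \eqref{eq:multi_bound}, a constant $C$ independent of $n$. Viewing each stationary process $\bar X^{(n)}(\cdot)$ as a process on the whole of $\mathbb{Z}^d$ (putting $X_i^{(n)}(\cdot)\equiv 0$ for $i\notin\mathcal{T}_n$ and keeping the torus neighbourhood structure), the bound $\sup_n\E X_i^{(n)}\le C$ together with Markov's inequality gives tightness of each coordinate, and therefore tightness of the laws of $\bar X^{(n)}(0)$ on $\mathbb{Z}_+^{\mathbb{Z}^d}$ equipped with the topology of component-wise convergence. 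Along a subsequence one then has $\bar X^{(n)}(0)\Rightarrow\bar X^*$ for some proper random element $\bar X^*$, and by stationarity $\bar X^{(n)}(k)\Rightarrow\bar X^*$ for every $k$.

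Next I would verify that the sequence $\bar X^{(n)}(\cdot)$ and the true infinite-lattice process $\bar X(\cdot)$ satisfy the continuity property, arguing exactly as at the ends of the proofs of Theorems~\ref{thm:periodic} and~\ref{thm:multi_bound}: given initial states with $\bar X^{(n)}(0)\to\bar X(0)$ w.p.1 component-wise (Skorohod representation), drive all processes by one common independent family of i.i.d.\ service mappings (as prescribed by (D1) or (D2)), arrival vectors $(\xi_i)$ and routing decisions $(I_{ji})$. Since all queue lengths are integer-valued, component-wise convergence of the initial states forces eventual equality on every finite set of nodes; and since $X_i(k)$ depends only on the initial states and on the driving randomness within a ball of radius linear in $k$ about $i$, for all $n$ large this finite ``light cone'' lies in the bulk of $\mathcal{T}_n$, where the torus and lattice dynamics coincide. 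Hence, w.p.1, $X_i^{(n)}(k)=X_i(k)$ for all sufficiently large $n$, which is the required convergence. Coupling the subsequence of stationary processes with an infinite-lattice process started from $\bar X(0)$ distributed as $\bar X^*$, we obtain $\bar X^{(n)}(k)\to\bar X(k)$ w.p.1 for each $k$, so $\bar X(k)$ equals $\bar X^*$ in distribution for all $k$; thus $\bar X^*$ is a proper stationary distribution, which is the claimed stability.

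It remains to record translation invariance and the moment bound. Each torus stationary law is invariant under the torus shifts; fixing a finite window $W$ and a shift $v\in\mathbb{Z}^d$, for $n$ large the torus shift by $v$ agrees with the lattice shift on $W$, so $(X_i^{(n)})_{i\in W}$ and $(X_{i+v}^{(n)})_{i\in W}$ are equal in law, and letting $n\to\infty$ along the subsequence shows $\bar X^*$ is $\mathbb{Z}^d$-translation invariant. Finally, since $X_i^{(n)}\Rightarrow X_i^*$ marginally, Fatou's lemma together with Theorem~\ref{thm:multi_bound} yields $\E X=\E X_i^*\le\liminf_n\E X_i^{(n)}\le$ the right-hand side of \eqref{eq:multi_bound-inf}. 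I expect the verification of the continuity property to be the main obstacle: with no monotonicity available, the whole limit transition rests on the uniform moment bound (for tightness) and on this finite-dependence coupling argument, and the boundary/light-cone bookkeeping on the torus has to be done carefully.
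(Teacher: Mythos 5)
Your proposal is correct and follows essentially the same route as the paper's proof: tightness of the torus stationary laws from the uniform bound of Theorem~\ref{thm:multi_bound}, a subsequential weak limit identified as a stationary distribution of the infinite-lattice process via the continuity property (replacing the monotonicity used in the single-hop case), translation invariance inherited from the torus shifts, and Fatou's lemma for the moment bound. The only difference is that you spell out the light-cone coupling argument behind the continuity property, which the paper leaves as an ``easy to see'' remark.
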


\begin{remark}
Since the multi-hop process is not monotone, the constructions similar to that of \cite{Baccelli2018} cannot be applied. We provide a different construction, based on continuity alone.  Note that Theorem~\ref{multi_infinity} does {\em not} claim any form of the stationary distribution uniqueness.
The uniqueness properties (among the stationary distributions with finite second moments of the queue lengths) derived in 
\cite{Baccelli2018} and in this paper for the single-hop models, relied in essential way on the process monotonicity.
\end{remark}

\begin{proof}[Proof of Theorem \ref{multi_infinity}] We already know that for each torus $\mathcal{T}_n$ there exists a (unique) stationary 
distribution of the corresponding process $\bar X^{(n)}(\cdot)$. (It is translation-invariant, of course, by symmetry.)
We can view this distribution as the distribution on the entire lattice
$\mathbb Z^d$. Moreover, the uniform in $n$ bound \eqn{eq:multi_bound} on the expected queue length implies that these distributions (viewed as distributions on the entire lattice) are tight.
It is easy to see that the sequence of processes $\bar X^{(n)}(\cdot)$ and the process $\bar X(\cdot)$ (i.e., the ``true'' infinite-lattice process) 
satisfy the continuity property (as in Section \ref{sec:basic}). Proceeding analogously to the argument we used in the last two paragraphs of the proof of Theorem~\ref{thm:periodic}, we can construct a proper stationary process $\bar X(\cdot)$ for the infinite system. The constructed stationary distribution of $\bar X(\cdot)$ is a limit of those of $\bar X^{(n)}(\cdot)$, and therefore translation-invariant.
Finally, \eqn{eq:multi_bound-inf} follows from \eqn{eq:multi_bound} and Fatou's lemma.
\end{proof}

\section{Continuous-time setting} \label{sec:continuous}

In this Section we describe how our results translate to the continuous-time Markovian setting. The proofs follow the same lines as those provided in Sections \ref{sec:finite}--\ref{sec:infinite_multi} for the discrete-time setting, with minor changes. In the case of finite networks, proofs of stability and moment bounds in continuous time (results corresponding to those in Section \ref{sec:finite}) turn out to be simplified versions of the proofs in discrete time, as, due to Markovian assumptions, the probability that two or more events happen in a small time interval is negligible. We thus restrict ourselves to short descriptions of proofs, pointing out where they start repeating discrete-time proofs.

As far as construction of stationary distributions for infinite networks is concerned, proofs in continuous time are based on the same continuity and monotonicity properties as their discrete-time analogues and thus repeat them almost verbatim. For this reason,
for infinite networks, we
 only present the results (without proofs), corresponding to those in Sections \ref{sec:infinite_single} and \ref{sec:infinite_multi}.

\subsection{Finite networks: model.} \label{sec:model_cont_finite}

Assume, as before, that there are $N$ interacting queues. Arrivals into queue $i$ occur according to a Poisson process with a constant rate $\lambda_i$, independent of all the other processes. The instantaneous departure rate from queue $i$ at time $t$,
conditioned on the state $\bar X(t)$, is $\psi_i(\bar X(t))$; more precisely, the number of departures up to time $t$ is 
$\Pi_i(\int_0^t \psi_i(\bar X(\tau))d\tau)$, where $\Pi_i(\cdot)$ are independent unit-rate Poisson processes.

We assume that all the conditions on the functions $\psi_i(\cdotp), ~ i=1,\ldots,N$ and on the arrival intensities $\ol$ imposed in Section \ref{sec:model_discrete_finite}, hold. More precisely, we assume that the functions $\psi_i(\cdotp),~i=1,\ldots,N$ satisfy condition \eqref{eq:fainess_def_1} with functions $h$ and $g$ satisfying Conditions {\it (H)} and {\it (G)}, respectively; and we assume that the arrival intensities $\ol$ satisfy condition \eqref{eq-sub-critical}. We will also use functions $G$ and $F$ defined in \eqref{eq:def_G} and \eqref{eq:def_F}, respectively.

\subsection{Finite networks: stability analysis.}

\begin{theorem} \label{thm:stability_cont}
For the continuous-time model defined in Section~\ref{sec:model_cont_finite} such that condition \eqref{eq-sub-critical} holds, the Markov process $\{\bar{X}(t)\}_{t \ge 0}$ is stable.
\end{theorem}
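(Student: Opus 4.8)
The plan is to mimic the discrete-time proof of Theorem~\ref{thm:fairness_stability}, using the same Lyapunov function $F(\bar y) = \sum_i h'(\nu_i) G(y_i)$ and now invoking the standard Foster-type criterion for continuous-time Markov chains (e.g. via the generator/mean-drift condition, since $\{\bar X(t)\}$ is a continuous-time jump Markov chain with bounded transition rates out of any fixed state). First I would fix $\varepsilon>0$ with $\lambda_i < \nu_i - \varepsilon$ for all $i$, exactly as before. The key simplification, flagged in the text, is that in continuous time the probability of two or more jumps in a short interval is $o(dt)$, so the drift computation is the ``infinitesimal'' version of the discrete-time one: from state $\bar x$, in time $dt$ each queue $i$ has an arrival (rate $\lambda_i$, changing $G(x_i)$ by $\Delta(x_i) = g(x_i+1)-g(x_i)$) or a departure (rate $\psi_i(\bar x)$ when $x_i>0$, changing $G(x_i)$ by $-(G(x_i)-G(x_i-1)) = -g(x_i)$).

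Concretely, I would write the generator applied to $F$ as
\begin{align*}
(\mathcal{A}F)(\bar x) &= \sum_i h'(\nu_i)\Big(\lambda_i \big(G(x_i+1)-G(x_i)\big) + \psi_i(\bar x)\big(G(x_i-1)-G(x_i)\big)\Big)\\
&= \sum_i h'(\nu_i)\big(\lambda_i g(x_i+1) - \psi_i(\bar x) g(x_i)\big),
\end{align*}
which is precisely line \eqref{eq:discrete_cont} of the discrete-time proof (the extra negative ``double-difference'' term that appeared in discrete time is simply absent here, which only helps). From this point the estimation is identical to the discrete-time argument: decompose $\lambda_i g(x_i+1) = (\lambda_i-\nu_i)g(x_i) + (\nu_i - \psi_i(\bar x))g(x_i) + \psi_i(\bar x)g(x_i) + \lambda_i \Delta(x_i)$ — wait, more cleanly, write $\lambda_i g(x_i+1) - \psi_i(\bar x)g(x_i) = (\lambda_i - \psi_i(\bar x))g(x_i) + \lambda_i\Delta(x_i)$, then use \eqref{eq:fairness_inequality} (which holds verbatim, since it only uses \eqref{eq:fainess_def_1} and concavity of $h$) together with $\lambda_i < \nu_i - \varepsilon$ to bound $(\mathcal{A}F)(\bar x) \le \sum_i h'(\nu_i)\big(-\varepsilon g(x_i) + \Delta(x_i)\big)$. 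Then Condition~(G), via \eqref{eq:cond_Delta}, gives exactly as in Theorem~\ref{thm:fairness_stability} that $(\mathcal{A}F)(\bar x) \le -\delta < 0$ outside a finite set of states, so the Foster--Lyapunov criterion for continuous-time chains (with a verification that $\mathcal{A}F$ is bounded above on all of the state space and that the chain is non-explosive, the latter being immediate since the total jump rate from $\bar x$ is at most $\sum_i \lambda_i + N$) yields positive recurrence, hence stability.

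The main obstacle — really the only thing requiring care beyond copying the discrete-time algebra — is the technical justification that $F$ qualifies as a legitimate Lyapunov function in the continuous-time Foster framework: one must check $F \ge 0$ with finite level sets (true since $h'(\nu_i)>0$ by Condition~(H) and $G$ is strictly increasing with $G(z)\to\infty$), non-explosiveness of the process (clear from the bounded jump rates), and that the mean drift inequality $(\mathcal{A}F)(\bar x) \le -\delta$ outside a finite set indeed implies positive recurrence (the continuous-time analogue of Foster's theorem, e.g. via Dynkin's formula applied to $F(\bar X(t\wedge \tau))$ where $\tau$ is the hitting time of the finite exceptional set). Since all of these are routine and the core drift estimate reduces literally to the displayed chain of inequalities in the proof of Theorem~\ref{thm:fairness_stability} with the $-\sum_i h'(\nu_i)\lambda_i\psi_i(\bar x)(g(x_i+1)-g(x_i))$ term deleted, I would present the proof as a short remark pointing to the discrete-time computation and noting the simplification.
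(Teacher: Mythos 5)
Your proposal is correct and follows essentially the same route as the paper: the paper likewise takes the drift expression $\sum_i h'(\nu_i)\bigl(\lambda_i (G(x_i+1)-G(x_i)) + \psi_i(\bar x)(G(x_i-1)-G(x_i))\bigr)$, observes that it coincides with line \eqref{eq:discrete_cont} of the discrete-time proof, and then reuses the remainder of the argument for Theorem~\ref{thm:fairness_stability}, invoking the continuous-time Foster--Lyapunov criterion of Tweedie. Your additional remarks on non-explosiveness and the admissibility of $F$ are sound technical padding that the paper leaves implicit.
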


\begin{proof}[Proof of Theorem \ref{thm:stability_cont}]

The proof is a simplified version of that of Theorem \ref{thm:fairness_stability}. By the Lyapunov-Foster criterion, in order to show positive recurrence, it is sufficient to show that
$$
\sum_i (\lambda_i (F(x_i+1)-F(x_i) + \psi_i(\ox)(F(x_i-1)-F(x_i)) < -\delta,
$$
for the function $F$ defined in \eqref{eq:def_F}, some $\delta > 0$, and for values of $\ox$ outside of a compact set. This may be found in, e.g. \cite{Tweedie1981}. 

Note that the expression on the LHS of the above may be written as
$$
\sum_i h'(\nu_i) (\lambda_i (G(x_i+1) - G(x_i)) + \psi_i(\ox)(G(x_i-1)-G(x_i)),
$$
which is equal to \eqref{eq:discrete_cont}, and the rest of the proof of Theorem \ref{thm:fairness_stability} applies.
\end{proof}

\subsection{Finite networks: moment bounds.}

As in discrete time, once stability is established, one can use similar arguments to establish steady-state moment bounds.

Since the stationary regime exists under conditions of Theorem \ref{thm:stability_cont}, in this section we will write $\bar X$ to represent a random vector with the distribution equal to that of $\bar X(t)$ in the stationary regime.

\begin{theorem} \label{thm:fair_tight_cont}
Assume that all conditions of Theorem \ref{thm:stability_cont} hold. Fix $\varepsilon > 0$ such that $\lambda_i < \nu_i - \varepsilon$ for all $i$. Then
$$
\sum_i h'(\nu_i) \E(g(X_i)) \le \frac{1}{\varepsilon} \sum_i h'(\nu_i) \E(\Delta(X_i)).
$$

\end{theorem}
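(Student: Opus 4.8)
The plan is to mirror the discrete-time argument of Theorem~\ref{thm:fair_tight}, exploiting the fact that in continuous time the generator replaces the one-step drift and that only single-event transitions occur. First I would consider the stationary version of the Markov process $\bar X(\cdot)$, which exists by Theorem~\ref{thm:stability_cont}, and apply the process generator $\mathcal L$ to the Lyapunov function $F(\bar y) = \sum_i h'(\nu_i) G(y_i)$. Because arrivals into queue $i$ occur at rate $\lambda_i$ and departures at rate $\psi_i(\bar x)$, the generator acting on $F$ gives
$$
\mathcal L F(\bar x) = \sum_i h'(\nu_i)\bigl(\lambda_i (G(x_i+1)-G(x_i)) + \psi_i(\bar x)(G(x_i-1)-G(x_i))\bigr),
$$
which is precisely the expression appearing in \eqref{eq:discrete_cont} (as already noted in the proof of Theorem~\ref{thm:stability_cont}). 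The chain of inequalities in \eqref{eq:discrete_cont}, which uses only the concavity of $h$, the utility-maximisation inequality \eqref{eq:fairness_inequality}, and $\lambda_i < \nu_i - \varepsilon$, then yields
$$
\mathcal L F(\bar x) \le -\varepsilon \sum_i h'(\nu_i) g(x_i) + \sum_i h'(\nu_i)\Delta(x_i).
$$

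Next I would integrate this drift bound against the stationary distribution. The natural tool is the fact that for a positive recurrent Markov process with stationary distribution $\pi$, one has $\int \mathcal L F\, d\pi = 0$ whenever $F$ is in a suitable domain; since this requires some care when $F$ is unbounded, I would instead argue exactly as in the discrete-time proof, working with a process started from an arbitrary (e.g.\ empty) initial state, using that $\E F(\bar X(t)) < \infty$ for all $t$ under the Poisson arrival assumptions, writing $\E F(\bar X(t)) - \E F(\bar X(0)) = \int_0^t \E[\mathcal L F(\bar X(s))]\, ds$, and noting that convergence in distribution $\bar X(t) \Rightarrow \bar X$ together with Skorohod representation and Fatou's lemma (applied to the function $\varepsilon \sum_i h'(\nu_i)g(X_i(s)) - \sum_i h'(\nu_i)\Delta(X_i(s))$, which is bounded below by the finite constant $-c$ as in \eqref{eq:bound_discrete_cont}) gives
$$
0 \le \limsup_{t\to\infty}\frac{1}{t}\bigl(\E F(\bar X(t)) - \E F(\bar X(0))\bigr) \le \E\Bigl[-\varepsilon \sum_i h'(\nu_i) g(X_i) + \sum_i h'(\nu_i)\Delta(X_i)\Bigr].
$$
Rearranging gives the claimed bound $\sum_i h'(\nu_i)\E g(X_i) \le \frac{1}{\varepsilon}\sum_i h'(\nu_i)\E\Delta(X_i)$.

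The main obstacle, as in the discrete-time case, is the rigorous justification of passing the drift inequality to the stationary regime when $F$ and $g$ are unbounded: one must verify $\E F(\bar X(t)) < \infty$ along the way (which follows from the i.i.d.\ Poisson arrivals and the boundedness of $\psi_i$) and secure the lower bound on $-\varepsilon g + \Delta$ needed for Fatou, which is exactly the content of Condition~(G) via $g(y)/\Delta(y) \to \infty$ — guaranteeing $-\varepsilon g(y) + \Delta(y)$ is bounded above by a finite constant $c$, hence $\varepsilon g(y) - \Delta(y) \ge -c$. Beyond this, the continuous-time computation is genuinely simpler than the discrete-time one because there is no need to handle the simultaneous arrival-and-departure terms; the quadratic correction term $-\sum_i h'(\nu_i)\lambda_i \psi_i(\bar x)(g(x_i+1)-g(x_i))$ that appeared in \eqref{eq:discrete_cont} and was discarded by a one-sided bound simply does not arise. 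I would therefore present the proof as: (i) identify $\mathcal L F$ with \eqref{eq:discrete_cont}; (ii) invoke the chain of inequalities there; (iii) repeat verbatim the limiting argument from the proof of Theorem~\ref{thm:fair_tight}.
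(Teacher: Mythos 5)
Your proposal is correct and follows essentially the same route as the paper: identify the generator (i.e.\ the time-derivative of $\E F(\bar X(t))$) with the expression in \eqref{eq:bound_discrete_cont}, reuse the chain of inequalities from the discrete-time proof, and pass to the stationary regime via Skorohod representation and Fatou's lemma. The only detail the paper makes explicit that you merely flag is why $\E F(\bar X(t))<\infty$: condition \eqref{eq:cond_Delta_cor} forces $g$ (hence $G$) to be $o(e^{ax})$ for every $a>0$, while the Poisson arrivals give finite exponential moments of $\bar X(t)$.
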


\begin{proof}[Proof of Theorem \ref{thm:fair_tight_cont}] 
Note that condition \eqref{eq:cond_Delta_cor} implies that $g(x) = o(e^{ax})$ as $x \to \infty$ for any $a > 0$. This, in turn, implies that $G(x) = o(e^{ax})$ as $x \to \infty$ for any $a > 0$. Note also that, as arrival flows are given by Poisson processes, for any $t$ and $\ox$, there exists $a>0$ such that 
$$
\E(e^{a\bar{X}(t)} ~|~ \bar{X}(0) = \ox) < \infty.
$$ 
Then $\E(F(\bar{X}(t))|\bar{X}(0) = \ox) < \infty$  for any $t$ and any $\ox$. 

Fix $\varepsilon > 0$ such that $\lambda_i < \nu_i - \varepsilon$ for all $i$. Standard Poisson-process arguments imply that
\begin{align*}
& \frac{d(\E(F(\bar{X}(t))~|~\bar{X}(0) =\ox))}{dt} \\ & = \sum_i h'(\nu_i) (\lambda_i \E(G(X_i+1) - G(X_i)) + \E(\psi_i(\ox)(G(x_i-1)-G(x_i))),
\end{align*}
and the RHS of the above is equal to \eqref{eq:bound_discrete_cont}. The rest of the proof is the same as that of Theorem~\ref{thm:fair_tight}, with $\E \bigl(F(\bar{X}(k+1)) - F(\bar{X}(k)) ~|~\bar{X}(k) = \bar{x}\bigr)$ replaced with $\dfrac{d(\E(F(\bar{X}(t))~|~\bar{X}(0) =\ox))}{dt}$.
\end{proof}

\subsection{Stability analysis of infinite single-hop networks}

We now turn our attention to infinite networks. In the single-hop setting, as in the discrete case, the queues (or nodes) are assumed to be located on a $d$-dimensional lattice, with the service rates given by 
\begin{equation} \label{eq:rates_baccelli_cont}
\psi_i(\ox) = \frac{x_i}{\sum_{j \in \mathbb{Z}^d} a_{j-i} x_j},
\end{equation}
where $a_0 = 1$, $a_i = a_{-i} \ge 0$ for all $i \in \mathbb{Z}^d$ and $L = \sup\{|i|: a_i > 0\} < \infty$.
For each $i$, the nodes $j$ within the finite set $\mathcal{N}_i = \{j ~|~a_{j-i}>0\}$ are called {\em neighbours} of $i$. Note that $i\in \mathcal{N}_i$. 

Unlike the discrete-time case, these assumptions describe the system completely and no additional structural constructions are needed in the continuous-time case.

We say that the arrival rates $\lambda_i$ are {\em periodic}, if the following holds: (a) the values of $\lambda_i$ are given for $i$ within the rectangle $\mathcal{I} = [0, \ldots, C_1-1] \times \ldots \times [0, \ldots, C_d-1]$ with some fixed positive integers $C_1,\ldots, C_d$; (b) for any $i\in  \mathbb{Z}^d$ and any $k=1,\ldots,d$, $\lambda_{i + C_k e_k} = \lambda_i$, where $e_k$ is the $k$-th unit coordinate vector (with $k$-th entry equal to $1$ and all other entries equal to zero). 

The following result is a continuous-time analogue of \ref{thm:periodic}.

\begin{theorem} \label{thm:periodic_cont} Consider periodic rates $\bar \lambda$.
Assume that there exists a periodic $\bar \nu$ from the set
$$
\mathcal{C} = \{\bar{\mu}: \quad \text{there exists} \quad \bar{p} \quad \text{such that} \quad \bar \mu \le \bar \psi(\bar{p}) \}
$$
such that $\bar \lambda < \bar \nu$. Consider an infinite network with arrival rates $\bar \lambda' \le \bar \lambda$.
Then this infinite network is stable, and there exists a stationary regime with finite second moments $\E X_i^2$ of the queue lengths.
\end{theorem}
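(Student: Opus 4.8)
The plan is to mirror the proof of Theorem~\ref{thm:periodic} line by line, replacing each discrete-time ingredient by its continuous-time counterpart; in fact the continuous-time argument is slightly cleaner because the Poisson arrival streams automatically have all moments, so no per-slot truncation of the arrivals is needed. First, since the single-hop continuous-time process is monotone in its initial state (the graphical construction driven by independent rate-$\lambda_i$ arrival clocks and independent rate-$1$ ``departure-attempt'' clocks with uniform marks is monotone, exactly as in Section~\ref{sec-mon-cont}), and since a system with rates $\bar\lambda'\le\bar\lambda$ is dominated by the one with rates $\bar\lambda$, it suffices to prove the theorem for the periodic rates $\bar\lambda$ themselves. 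For each $n$ large enough that $nC_k>L$ for all $k$, I would introduce the auxiliary finite system on the torus $\mathcal{R}_n=\{i:i_k=-nC_k,\dots,nC_k-1\}$, with the torus neighbourhood structure. By Lemma~\ref{lemma:fairness} the rates \eqref{eq:rates_baccelli_cont} on $\mathcal{R}_n$ are $2$-fair in the corresponding set $\mathcal{C}$ (with $g(y)=y^2$, $h(y)=-y^{-1}$), and the assumed periodic $\bar\nu\in\mathcal{C}$ with $\bar\lambda<\bar\nu$ restricts, via a periodic witness $\bar p$ and the fact that for $nC_k>L$ the torus and lattice neighbourhoods coincide, to an admissible vector on $\mathcal{R}_n$; hence \eqref{eq-sub-critical} holds on the torus, and Theorem~\ref{thm:stability_cont} gives stability and a unique stationary distribution for $\bar X^{(n)}(\cdot)$.

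Next I would establish a uniform-in-$n$ second-moment bound. As in the proof of Theorem~\ref{thm:bound_third}, the finiteness of all moments of the Poisson arrivals, stability of the fluid limits of this system (\cite{SnSt2018}) and the results of \cite{Meyn1995} give $\E\big((X^{(n)}_i)^2\big)<\infty$ in the stationary regime. Theorem~\ref{thm:fair_tight_cont}, applied with $g(y)=y^2$ and $h(y)=-y^{-1}$ (so $h'(\nu_i)=\nu_i^{-2}$ and $\Delta(y)=2y+1$), then yields
\[
\sum_{i\in\mathcal{R}_n}\frac{\E\big((X^{(n)}_i)^2\big)}{\nu_i^2}\ \le\ \frac1\varepsilon\sum_{i\in\mathcal{R}_n}\frac{2\,\E\big(X^{(n)}_i\big)+1}{\nu_i^2},
\]
and, by periodicity of $\bar\nu$, $\bar\lambda$ and of the stationary law of $\bar X^{(n)}$, both sides equal the corresponding sum over a single period cell $\mathcal{I}$ times the number of cells in $\mathcal{R}_n$. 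Using the elementary inequality $2t\le\delta t^2+\delta^{-1}$ with $\delta$ small to absorb the first moments into the left-hand side, one gets $\sum_{i\in\mathcal{I}}\nu_i^{-2}\E\big((X^{(n)}_i)^2\big)\le$ a constant that does not depend on $n$, and therefore a uniform bound $\E\big((X^{(n)}_i)^2\big)\le C<\infty$ for all $n$ and all $i$.

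Finally, I would carry out the limit transition exactly as in the last two paragraphs of the proof of Theorem~\ref{thm:periodic}. Viewing each $\bar X^{(n)}(\cdot)$ as a process on all of $\mathbb{Z}^d$ (putting $X_i\equiv0$ off $\mathcal{R}_n$ while keeping the torus dynamics), the uniform bound makes the stationary laws tight on $\mathbb{Z}^d$; along a subsequence $\bar X^{(n)}(0)\Rightarrow\bar X^*$ for some proper random element, and hence $\bar X^{(n)}(t)\Rightarrow\bar X^*$ for every $t$. The sequence $\bar X^{(n)}(\cdot)$ and the true infinite-lattice process $\bar X(\cdot)$ satisfy the continuity property of Section~\ref{sec:basic}; coupling $\bar X(\cdot)$ with initial state distributed as $\bar X^*$ to that subsequence, $\bar X^{(n)}(t)\to\bar X(t)$ w.p.1 for each $t$, so $\bar X(t)\stackrel{d}{=}\bar X^*$ for all $t$, i.e.\ $\bar X(\cdot)$ is a stationary infinite-system process, and Fatou's lemma with the uniform bound gives $\E X_i^2<\infty$.

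The routine parts --- the generator/drift identity behind Theorem~\ref{thm:fair_tight_cont}, the convexity/AM--GM bookkeeping, and the check that the periodic $\bar\nu$ restricts admissibly to each torus --- are straightforward. The one place that needs genuine care is the continuity property for the \emph{infinite} continuous-time process: setting up a common graphical construction on the driving Poisson clocks, verifying non-explosion (each $X_i(t)$ is bounded by $X_i(0)+\mathrm{Poisson}(\lambda_i t)$ because departure rates never exceed $1$), and showing that the finite-torus processes converge to it on compact time intervals almost surely. This is, however, the same construction already used repeatedly in the paper, which is noted to carry over from discrete to continuous time essentially verbatim.
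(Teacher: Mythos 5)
Your proposal is correct and matches the paper's intended argument: the paper explicitly omits the proof of Theorem~\ref{thm:periodic_cont}, stating that it repeats the proof of Theorem~\ref{thm:periodic} almost verbatim using the same monotonicity, continuity and tightness ingredients, which is precisely what you do. Your use of Theorem~\ref{thm:fair_tight_cont} with $g(y)=y^2$, $h(y)=-y^{-1}$ plus the AM--GM absorption to get the uniform second-moment bound is the natural continuous-time substitute for Theorem~\ref{thm:bound_third} (no continuous-time analogue of which is stated), and correctly exploits that Poisson arrivals remove the extra moment hypotheses.
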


As in the discrete-time case, we immediately obtain the following corollary.

\begin{corollary}
\label{cor-conj-proof_cont}
Consider the symmetric system where $\lambda_i = \lambda$ for all $i$ and assume 
\beql{eq-sym-stabil_cont}
\lambda < \dfrac{1}{\sum_{j \in \mathbb{Z}^d} a_j}.
\end{equation}
Then the system is stable and its lower invariant measure (i.e., the stationary distribution dominated by any other)
is such that 
$
\E X_i^2 < \infty.
$
\end{corollary}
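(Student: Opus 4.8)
The plan is to mirror, essentially line for line, the proof of the discrete-time Corollary~\ref{cor-conj-proof}, replacing each discrete-time ingredient by its continuous-time counterpart. First I would observe that Lemma~\ref{lemma:fairness} is a purely algebraic statement about the rates: it shows that the rates \eqref{eq:rates_baccelli_cont} are utility-maximising (i.e.\ satisfy \eqref{eq:fainess_def_1}) with $g(y)=y^2$, $h(y)=-y^{-1}$ and the set $\C$ appearing in Theorem~\ref{thm:periodic_cont}, and nothing in its proof depends on whether time is discrete or continuous, so it applies here unchanged. Likewise, Lemma~\ref{rem:symmetric_fairness} is scale-free and shows that the hypothesis \eqref{eq-sym-stabil_cont} is exactly the statement that there exists $\on\in\C$ with $\ol<\on$; since here $\ol$ is constant, taking $\bar p=(1,\ldots,1)$ produces such a $\on$, which is trivially periodic with any period. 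Hence the hypotheses of Theorem~\ref{thm:periodic_cont} are satisfied.

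Applying Theorem~\ref{thm:periodic_cont} then yields stability of the infinite continuous-time network together with the existence of \emph{some} stationary distribution with $\E X_i^2<\infty$. The only remaining point is to transfer this moment bound to the lower invariant measure. For this I would invoke the monotonicity of the single-hop continuous-time process with respect to the component-wise partial order (the exact analogue of the property described in Section~\ref{sec-mon-cont}): the lower invariant measure is, by definition, dominated by every stationary distribution, in particular by the one produced by Theorem~\ref{thm:periodic_cont}. A monotone coupling of the two, combined with Fatou's lemma applied coordinatewise to $X_i^2$, then gives $\E X_i^2<\infty$ under the lower invariant measure as well.

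I do not expect any genuine obstacle: each ingredient has already been established — Lemmas~\ref{lemma:fairness} and~\ref{rem:symmetric_fairness} are algebraic and hence time-scale independent, Theorem~\ref{thm:periodic_cont} is the continuous-time periodic result itself, and the passage from "some" stationary distribution to the lower invariant measure is the standard monotone-coupling argument already used verbatim in the discrete-time development. The only item worth stating explicitly is that the continuous-time single-hop torus processes are monotone and that their stationary distributions are tight via a uniform (in the torus size) second-moment bound obtained from the continuous-time analogue of Theorem~\ref{thm:bound_third}; but this is precisely what underlies Theorem~\ref{thm:periodic_cont}, so it can be cited rather than re-proved.
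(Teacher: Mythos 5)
Your proposal is correct and follows essentially the same route as the paper: the paper derives this corollary exactly as its discrete-time counterpart (Corollary~\ref{cor-conj-proof}), namely by combining Lemmas~\ref{lemma:fairness} and~\ref{rem:symmetric_fairness} with Theorem~\ref{thm:periodic_cont} to get stability and a stationary distribution with finite second moments, and then noting that the lower invariant measure is dominated by that distribution, so its second moments are finite too. Your extra remarks on monotone coupling and Fatou are just a more explicit rendering of that last domination step.
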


In the continuous-time setting the corollary above proves Conjecture 1.12 in \cite{Baccelli2018}, with all its implications stated in \cite[Section 1.1]{Baccelli2018}, in particular the uniqueness of the stationary regime 
{\em with finite second moments of the queue lengths}, for the infinite symmetric network.

We note also that our results allow us to consider networks which are not necessarily symmetric, and remark \ref{rem:periodic} applies to the continuous-time setting too, significantly expanding the range of arrival intensities for which stability of infinite networks holds.

%

\subsection{Stability analysis of infinite multi-hop networks}

We now consider infinite multi-hop networks. Just as in the single-hop setting, the nodes (queues) are located on the $d$-dimensional lattice. 

The arrival processes are such that $\lambda_i = \lambda q$ with a fixed $q \in (0,1)$, for all $i$.

Upon a service completion at any node, a job leaves the system with probability $q$ or joins the queue of a neighbour of node $i$ (i.e., connected by one lattice edge),
chosen independently at random (i.e., each neighbour is chosen with probability $(1-q)/2^d)$). All the routing decisions are taken independently of everything else. The service rates are given by
$$
\psi_i(\ox) = \frac{x_i}{\sum_{j \in \mathcal{N}_i} x_j},
$$
where $\mathcal{N}_i$ is the neighbourhood of node $i$ on the $\mathbb{Z}^d$ lattice, which by convention includes node $i$ itself.

Consider the auxiliary, finite version of our system on the set $\mathcal{T}_n = \{i: i_k=-n,\ldots,n-1\}$, "wrapped around" to form a torus. (The node neighbourhood structure is that of the torus.) Then $\mathcal{T}_n$ is a finite $2^d$-regular graph, and \cite[Theorem 5]{SnSt2018} implies that if $\lambda < 1/(2^d+1)$, then the system is stable. Therefore, there exists a stationary distribution of the number of messages in each queue. Due to symmetry, the (stationary) numbers of messages in any two queues are identically distributed, and we will consider queue $0$ for simplicity. Denote the stationary number of messages in queue $0$ in the system on torus $\mathcal{T}_n$ by $X^{(n)}$.

\begin{theorem} \label{thm:multi_bound_cont}
Consider the multi-hop model on torus $\mathcal{T}_n$, described above. Assume that $\lambda < \frac{1}{2^d+1}$. Then
\begin{equation} \label{eq:multi_bound_cont}
\E(X^{(n)}) \le \frac{\lambda}{q\left(\frac{1}{2^d+1} - \lambda\right)}.
\end{equation}
\end{theorem}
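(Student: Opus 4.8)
### Proof proposal

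The plan is to mirror the discrete-time argument of Theorem~\ref{thm:multi_bound}, but exploiting the Markovian structure to work directly with the generator rather than one-step differences. First I would fix $n$, work on the torus $\mathcal T_n$, and argue that the process is positive recurrent (hence has a unique stationary distribution) by \cite[Theorem 5]{SnSt2018}; by symmetry the stationary queue lengths $X_i^{(n)}$ are identically distributed, and I write $X^{(n)}$ for a generic one. The key preliminary is that $\E X^{(n)}<\infty$: this follows, as in the discrete-time proof, from the stability of the fluid limits established in \cite{SnSt2018} together with the $\E[X^{(n)}]<\infty$ criterion of \cite{Meyn1995} — in continuous time the analogous argument applies since departure rates are bounded and arrival rates are constant. (Strictly speaking one should first truncate nothing at all here, since the constant arrival rates already have all moments; this is what makes the continuous-time statement cleaner than its discrete-time counterpart.)

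The heart of the proof is a stationary balance computation. Writing $L$ for the generator of the Markov process on $\mathcal T_n$, I would apply $LV=0$ in stationarity, first to $V(\bar x)=x_0$ and then to $V(\bar x)=x_0^2$ (or $\sum_i x_i^2$, using symmetry). Applying $L$ to $x_0$ gives, in steady state, $0=\lambda q + (1-q)\E(\psi_0) - \E(\psi_0)$, using that the rate into node $0$ from a service completion at a neighbour $j$ is $\psi_j(\bar X)\cdot(1-q)/2^d$ summed over the $2^d$ neighbours and that $\E(\psi_j)$ is independent of $j$; this yields $\E(\psi_0(\bar X))=\lambda q/q=\lambda$. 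Next, applying $L$ to $x_0^2$: each departure from node $0$ decreases $x_0^2$ by $2x_0-1$ at rate $\psi_0$; each arrival (exogenous, rate $\lambda q$, or from a neighbour $j$, rate $\psi_j(1-q)/2^d$) increases it by $2x_0+1$. Setting the stationary expectation of this to zero produces an identity of the form
$$
0 = \E(\psi_0) + \lambda q + (1-q)\sum_{j}\tfrac{1}{2^d}\E(\psi_j) + 2\lambda q\,\E(X_0) + 2(1-q)\tfrac{1}{2^d}\sum_{j\neq 0}\E(X_0\psi_j) - 2\E(X_0\psi_0).
$$
Using $\E(\psi_j)=\lambda$ and the symmetry identity $\E(X_0\psi_j)=\E(X_j\psi_0)$ (the pair $(X_0,\psi_j)$ and $(X_j,\psi_0)$ having the same law for $j\in\mathcal N_0$), the cross terms collapse exactly as in \eqref{eq:multi_proof_aux_3}, giving $\E(X_0(A_0-\eta_0))$-type quantity equal to $(1-q)\frac1{2^d}\E(X_0) - \bigl(\frac{2^d+1}{2^d}-\frac{q}{2^d}\bigr)\E(X_0\psi_0)$.

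The final ingredient is the convexity bound $\sum_i X_i\psi_i(\bar X)\ge \frac{1}{2^d+1}\sum_i X_i$ (Jensen applied to $1/x$, exactly as in the discrete proof), which after taking expectations and using symmetry gives $\E(X_0\psi_0)\ge \frac{1}{2^d+1}\E(X_0)$. Substituting everything into the $x_0^2$ balance and simplifying (the $\E(\psi_0)$ and $\lambda q$ terms combine into a bounded constant, here just $\lambda$ rather than the $\E\xi^2+\cdots$ of the discrete case because there is at most one arrival at a time), I expect to land on
$$
0 \le \lambda + 2q\,\E(X_0)\Bigl(\lambda - \tfrac{1}{2^d+1}\Bigr),
$$
which rearranges to \eqref{eq:multi_bound_cont} since $\lambda<1/(2^d+1)$. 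The main obstacle I anticipate is the same as in the discrete-time proof: justifying that the generator may be applied to the unbounded functions $x_0$ and $x_0^2$ in stationarity (i.e.\ that $\E(LV)=0$ genuinely holds, with no boundary-at-infinity defect), which requires the a priori finiteness of the relevant moments. Here one would first establish $\E X^{(n)}<\infty$ via \cite{Meyn1995} as above, and then — to legitimately use the $x_0^2$ identity — either assume $\E[X^{(n)}]^2<\infty$ provisionally and remove it by a truncation/continuity argument as at the end of the proof of Theorem~\ref{thm:multi_bound}, or observe that in continuous time the stronger moment finiteness again follows directly from \cite{Meyn1995} since the constant-rate Poisson arrivals have all moments. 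The rest is bookkeeping identical in spirit to Theorem~\ref{thm:multi_bound}.
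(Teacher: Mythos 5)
Your proposal is correct and follows essentially the same route the paper intends: the paper omits the proof of Theorem~\ref{thm:multi_bound_cont}, stating that the continuous-time arguments are simplified generator-based analogues of the discrete-time ones, and that is exactly what you carry out for Theorem~\ref{thm:multi_bound} -- the balance $\E(\psi_0)=\lambda$ from $Lx_0$, the $x_0^2$ balance, the symmetry identity $\E(X_0\psi_j)=\E(X_j\psi_0)$, the Jensen bound $\E(X_0\psi_0)\ge\frac{1}{2^d+1}\E(X_0)$, and the moment-finiteness justification via fluid-limit stability and \cite{Meyn1995} (indeed cleaner here since Poisson arrivals have all moments). One minor arithmetic slip: the constant terms in the $x_0^2$ balance sum to $\lambda q+\E(\psi_0)+(1-q)\lambda=2\lambda$ rather than $\lambda$, and it is precisely this $2\lambda$ that, after dividing by $2q\bigl(\tfrac{1}{2^d+1}-\lambda\bigr)$, yields the stated bound.
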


The fact that the bound in \eqref{eq:multi_bound_cont} does not depend on $n$ allows us to prove a result on an infinite-lattice multi-hop model.

\begin{theorem} 
\label{multi_infinity_cont}
Consider the multi-hop model of this section defined for the entire lattice $\mathbb{Z}^d$ and assume that $\lambda < 1/(2^d+1)$. Then the process is stable. Moreover, there is a translation-invariant stationary distribution, for which
\begin{equation} 
\label{eq:multi_bound-inf_cont}
\E X \le \frac{\lambda}{q\left(\frac{1}{2^d+1} - \lambda\right)},
\end{equation}
where $X$ has the distribution of $X_i(k)$ (for any $i$ and $k$) in steady-state.
\end{theorem}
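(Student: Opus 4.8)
The plan is to mirror, almost verbatim, the limit-transition argument used to prove Theorem~\ref{multi_infinity} in the discrete-time setting, replacing the discrete-time torus estimate of Theorem~\ref{thm:multi_bound} by its continuous-time counterpart Theorem~\ref{thm:multi_bound_cont}. The three ingredients are: (i) existence of a (unique, translation-invariant, by symmetry) stationary distribution for the finite-torus process $\bar X^{(n)}(\cdot)$ on $\mathcal{T}_n$; (ii) the uniform-in-$n$ first-moment bound \eqref{eq:multi_bound_cont}; and (iii) the continuity property (Section~\ref{sec:basic}) for the sequence $\bar X^{(n)}(\cdot)$ and the true infinite-lattice process $\bar X(\cdot)$. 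Item (i) is supplied by \cite[Theorem 5]{SnSt2018} under $\lambda < 1/(2^d+1)$, exactly as recorded in the text preceding Theorem~\ref{thm:multi_bound_cont}; item (ii) is Theorem~\ref{thm:multi_bound_cont} itself, whose RHS does not depend on $n$.

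First I would view each stationary process $\bar X^{(n)}(\cdot)$ as a process on the full lattice $\mathbb{Z}^d$ (with the torus neighbourhood structure retained), so that its stationary law becomes a probability measure on $\mathbb{Z}_+^{\mathbb{Z}^d}$. The uniform bound $\E X^{(n)} \le \frac{\lambda}{q(1/(2^d+1)-\lambda)}$, together with translation-invariance, gives a uniform bound on the expected queue length at every coordinate simultaneously, hence tightness of the marginal at each site, hence (by a diagonal argument over the countably many sites, using the product topology of component-wise convergence) tightness of the sequence of full distributions on $\mathbb{Z}_+^{\mathbb{Z}^d}$. Extract a subsequence along which $\bar X^{(n)}(0) \Rightarrow \bar X^*$ for some proper random element $\bar X^*$ (all components finite w.p.1). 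Because each $\bar X^{(n)}(\cdot)$ is stationary, $\bar X^{(n)}(t) \Rightarrow \bar X^*$ for every $t\ge 0$ along the same subsequence.

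Next I would invoke the continuity property: for the sequence $\bar X^{(n)}(\cdot)$ and the infinite-lattice process $\bar X(\cdot)$ with $\bar X(0)$ distributed as $\bar X^*$, one can construct a common probability space on which $\bar X^{(n)}(0) \to \bar X(0)$ w.p.1 and then $\bar X^{(n)}(t) \to \bar X(t)$ w.p.1 for every $t\ge 0$. Consequently $\bar X(t)$ is equal in distribution to $\bar X^*$ for all $t$, i.e. $\bar X(\cdot)$ is a stationary version of the infinite-lattice process, proving stability. Translation-invariance of the constructed law follows because it is a weak limit of the translation-invariant torus laws. Finally, since $X_i^{(n)} \Rightarrow X_i$ (the site-$i$ marginal of $\bar X^*$), Fatou's lemma applied to $\E X^{(n)} \le \frac{\lambda}{q(1/(2^d+1)-\lambda)}$ yields \eqref{eq:multi_bound-inf_cont}. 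This is precisely the recipe used in the last two paragraphs of the proof of Theorem~\ref{thm:periodic} and again in Theorem~\ref{multi_infinity}, so the write-up can simply point to those arguments.

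The only genuinely delicate point — and the main obstacle — is verifying the continuity property for the continuous-time multi-hop torus processes and the infinite-lattice process. One must exhibit a single driving randomness (independent rate-one Poisson clocks $\Pi_i(\cdot)$ for potential service completions, independent uniform marks for the D1/D2 transmission decisions, independent Poisson arrival streams of rate $\lambda q$, and independent routing marks) under which all the processes $\bar X^{(n)}(\cdot)$ and $\bar X(\cdot)$ are constructed by the same local update rule, and then argue that, because interactions are strictly local (each update at node $i$ depends only on the finitely many neighbours in $\mathcal{N}_i$) and the number of events in any bounded time interval in any bounded region is a.s. finite, component-wise convergence of initial states propagates to component-wise convergence of the paths on compact time intervals. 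The finite-speed-of-propagation/locality feature is what makes this work; however, since the excerpt explicitly states that the continuity-based constructions in continuous time "repeat almost verbatim" their discrete-time analogues and that "versions of this process do have continuity properties," it is legitimate in this paper to assert the continuity property and refer to the discrete-time proofs of Theorems~\ref{thm:periodic} and \ref{multi_infinity} for the remaining steps, giving at most a one-line proof.
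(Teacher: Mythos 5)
Your proposal is correct and follows essentially the same route the paper intends: the paper states Theorem~\ref{multi_infinity_cont} without proof, explicitly deferring to the discrete-time argument of Theorem~\ref{multi_infinity} (tightness from the uniform-in-$n$ bound of Theorem~\ref{thm:multi_bound_cont}, subsequential weak limit, the continuity property to identify the limit as a stationary version of the infinite-lattice process, translation-invariance by passage to the limit, and Fatou's lemma for the moment bound). Your additional remarks on verifying the continuity property via a common driving randomness and locality of interactions correctly identify the one step the paper leaves implicit.
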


\end{document}